\newtheorem{theorem}{Theorem}[section]
\newtheorem{lemma}[theorem]{Lemma}
\newtheorem{corollary}[theorem]{Corollary}
\newtheorem{proposition}[theorem]{Proposition}
\theoremstyle{definition}
\newtheorem{definition}[theorem]{Definition}
\newtheorem{problem}[theorem]{Problem}
\theoremstyle{remark}
\newtheorem{remark}[theorem]{Remark}
\DeclareMathOperator{\wind}{wind}
\DeclareMathOperator{\ind}{ind}
\numberwithin{equation}{section}
\newcommand{\D}{\mathbb D}
\newcommand{\T}{\mathbb T}
\newcommand{\C}{\mathbb C}
\newcommand{\R}{\mathbb R}
\newcommand{\lb}{\lambda}
\begin{document}

\title[Spectral  properties of Bergman Toeplitz operators]{Spectral  properties of Toeplitz operators with harmonic function symbols  on the Bergman space}

%    Information for first author
\author[P. Cui]{Puyu  Cui}
%    Address of record for the research reported here
\address{School of Mathematics, Liaoning Normal University, Dalian 116029,   China}
\email{cuipuyu1234@163.com}
\author[Y. Lu]{YuFeng LU}     %%%  2nd Author's info, if exists, or you may delete this part directly  %%%
\address{School of Mathematical Sciences, Dalian University of Technology, Dalian 116024, China}
\email{lyfdlut@dlut.edu.cn }
\author[R. Yang]{Rongwei Yang}     %%%  3nd Author's info, if exists, or you may delete this part directly  %%%
\address{Deparetment of Mathematics and Statistics, University at Albany, Albany, NY 12222, USA}
\email{ryang@albany.edu}
\author[C. Zu]{Chao Zu$^*$ }{\thanks{* Corresponding author}}      %%%  4nd Author's info, if exists, or you may delete this part directly  %%%
\address{School of Mathematical Sciences, Dalian University of Technology, Dalian 116024, China}
\email{zuchao@dlut.edu.cn}

%    \thanks will become a 1st page footnote.
\thanks{The first author was supported  by the Scientific Research Fund of Liaoning Provincial Education Department of China  (Grant No. LJKMZ20221405), the  fourth  author was supported   by the National Natural Science Foundation of China (Grant No.12031002), 
the  fourth  author was supported  by the National Natural Science Foundation of China (Grant No.12401151), and the Postdoctral Researcher Foundation of China (Grant No.GZB20240100).}

%    Information for second author
%\author{Author Two}
%\address{Mathematical Research Section, School of Mathematical Sciences,
%Australian National University, Canberra ACT 2601, Australia}
%\email{two@maths.univ.edu.au}
%\thanks{Support information for the second author.}

%    General info
\subjclass[2010]{ 47B35, 47B38}

%\date{January 1, 2001 and, in revised form, June 22, 2001.}

%\dedicatory{This paper is dedicated to our advisors.}

\keywords{Bergman space,  Toeplitz operator, harmonic polynomial symbol, spectrum}

\begin{abstract}
This paper investigates the spectral properties of Toeplitz operators on the Bergman space of unit disk.
We present an integral representation of $ T^*_{z^m}$, which establishes a connection between the Bergman functions and the solutions of PDE theory.
In fact, by leveraging the Poincaré  theorem in difference equations and the solution forms of differential equations, this paper describes the kernels  of certain Toeplitz operators with harmonic polynomial symbols, and further gives the sufficient conditions for the connectedness of  the spectra of these  Toeplitz operators.
The spectral properties of $ T_\varphi$~with~$\varphi (z) =\overline{z}^{m} + \alpha z^m + \beta$ are characterized,  such as $\sigma(T_\varphi)= \overline{\varphi (\mathbb {D})}$, Fredholm index of $T_\varphi$ can only be one of  $m,~-m$~and~$0$, $T_\varphi$ satisfies Coburn's theorem.
These findings offer an illuminating example for the essential projective spectra of non-commuting operators.
\end{abstract}

\maketitle

%\section*{This is an unnumbered first-level section head}
%This is an example of an unnumbered first-level heading.

%% The correct journal style for \specialsection is all uppercase; a known bug
%% in amsart.cls prevents this, so input must be uppercase until it is fixed.
%\specialsection*{This is a Special Section Head}
%\specialsection*{THIS IS A SPECIAL SECTION HEAD}
%This is an example of a special section head%
%%%%%%%%%%%%%%%%%%%%%%%%%%%%%%%%%%%%%%%%%%%%%%%%%%%%%%%%%%%%%%%%%%%%%%%%
%\footnote{Here is an example of a footnote. Notice that this footnote
%text is running on so that it can stand as an example of how a footnote
%with separate paragraphs should be written.
%\par
%And here is the beginning of the second paragraph.}%
%%%%%%%%%%%%%%%%%%%%%%%%%%%%%%%%%%%%%%%%%%%%%%%%%%%%%%%%%%%%%%%%%%%%%%%%

\section{Introduction}
The classical Bergman space $L_a^2(\mathbb{D})$ is the closed subspace consisting of all analytic functions in the Hilbert space $L^2(\mathbb{D}, dA),$ where $dA$ refers to the normalized Lebesgue area measure on the unit disk $\mathbb{D}$. For $f, g\in  L^2(\mathbb{D},dA),$ the inner product is
\[ \langle f,g \rangle=\int_{\mathbb{D}}f(z)\overline{g(z)}dA.\]
For $\varphi\in L^\infty(\mathbb{D},dA),$ the collection of all essentially bounded measurable functions, the Toeplitz operator $T_\varphi$ on $L_a^2(\mathbb{D})$ with symbol $\varphi$ is defined by
\[T_\varphi(f)=P(\varphi f),\ \ \ f\in L_a^2(\mathbb{D}),\]
where $P$ is the orthogonal projection from $L^2(\mathbb{D},dA)$ onto $L_a^2 (\mathbb{D}).$ It is not hard to verify that the adjoint $T^*_\varphi=T_{\overline{\varphi}}$.

The classical Hardy space $H^2({\mathbb T})$ over the unit circle $\mathbb{T}$ is the closed subspace of $L^2({\mathbb T}, \frac{d\theta}{2\pi})$ consisting of functions that have analytic continuation into $\D$, and the Toeplitz operator on $H^2({\mathbb T})$ is defined parallelly.
The spectral theory of Toeplitz operators on Hardy space $H^2({\mathbb T})$ is relatively complete.
The spectra and essential spectra of bounded Toeplitz operators on $H^2({\mathbb T})$ are known to be path-connected \cite{Widom1, Widom2, Doug}.
In 1966, Coburn \cite{Coburn} demonstrated that every nonzero Toeplitz operator on $H^2(\T)$ has the property that either $T_\varphi$ is injective or $T^*_\varphi$ is injective. In this paper, we say that a Hilbert space linear non-scalar operator $T$ is of {\em Coburn type} if either $\ker T$ or $\ker T^*$ is equal to $\{0\}$.  Apparently, the Coburn type property is linked with the spectral property of Toeplitz operators. In the Hardy space setting, a great amount of information has been obtained, for instance see \cite{Doug}.
The case for the Bergman space is marked more challenging. The difficulty, to a large extent, is due to the fact that Bergman space functions usually don't have boundary values at $\T$. %Are there Toeplitz operators not of Coburn type?
Nevertheless, determining whether Toeplitz operators on $L_a^2 (\mathbb{D})$ are of Coburn type remains an appealing problem,
and some results have been obtained for Toeplitz operators with harmonic symbols. For example, it is shown only very recently that $T_{\overline{z} + c z^n}, n \geq 0, c \in \mathbb{C}$, are Coburn type \cite{Coburn Lee2023}.

The study of harmonic symbols constitutes a central aspect of the spectral theory of Toeplitz operators on $L_a^2 (\mathbb{D})$.
McDonald and Sundberg \cite{MC1979} showed that, in the case that symbols are either real harmonic or harmonic and piecewise continuous on $\T$, the essential spectra of Toeplitz operators are connected. Indeed, it was conjectured that the spectra of the Toeplitz operators with harmonic symbols are always path-connected \cite{conj}. However, some counterexamples were discovered later \cite{example, Invetible2020}. One is thus tempted to find subclasses of harmonic symbols for which the conjecture remains valid. As an effort along this line, Zhao and Zheng \cite{spec2016} showed that $\sigma(T_{\varphi})=\overline{\varphi(\D)}$ when $\varphi$ is of the form $\overline{z}+\alpha z+\beta, \alpha, \beta\in \C$. For more information on this topic, we refer the reader to \cite{DZ2020, Fa, Ka, Lue}.

This paper is organized as follows. In Section 2, we introduce a new integral formula for the operator $T_{\bar{z}}^m=(T^*_z)^m$ and recall some theorems used in this paper,
such as  Poincar\'{e}'s theorem, Perron's theorem and so on.
Section 3 describes the kernel and range of the Toeplitz operator $ T_{\overline{z}^m +f }$, where $f$ is  bounded analytic. The result implies that the Toeplitz operators  $T_{\overline{z}^m + cz^n}$, where $m \geq1,  n \geq 0, c \in \mathbb {C}$, are of Coburn type.
Using Poincar\'{e}'s theorem and Perron's theorem, the injective Toeplitz operators $T_{\overline{q}+p}$ with  analytic polynomials $p$ and $q$ are characterized in section 4.
The kernel of $T_{\overline{z}^{m} + \alpha z^m + \beta}$ with $m \ge 1, \alpha, \beta \in \mathbb {C}$ are also  illustrated.
The  connectedness of  spectrum $\sigma(T_\varphi)$ for the case $\varphi=\overline{q}+p$ are determined.
The last section  extends Zhao and Zheng's result to Toeplitz operators with symbols $\varphi={\overline{z}^{m} + \alpha z^m + \beta}$.
We  also  conclude that the Fredholm index of $T_{\varphi}$ is one of $m$,$-m$ and $0$.
Furthermore, based on the above facts, the definition of the essential projective spectra of non-commuting operators is proposed.

%-------------------------------------------------

\section{Preliminaries}

To simplify calculations in this paper, we start the preparation by mentioning a well-known fact.

\subsection{Spectral Properties}

Recall that $P: L^2(\mathbb{D}, dA)\to L^2_a(\mathbb{D})$ is the orthogonal projection.
\begin{lemma}\label{p}
For $m, k \geq 0$, we have
\begin{eqnarray*}
P(\overline{z}^mz^k)=   \left\{
                          \begin{aligned}
                           & \frac{k-m+1}{k+1}z^{k-m} \ \ & & k\geq m, \\
                           & 0 \ \ & & k < m.
                          \end{aligned}
                           \right.
\end{eqnarray*}
\end{lemma}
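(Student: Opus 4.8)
The plan is to compute $P(\overline{z}^m z^k)$ directly by expanding it against the standard orthogonal basis of monomials. First I would recall that $\{z^n\}_{n\ge 0}$ is an orthogonal basis of $L^2_a(\mathbb{D})$ and that, for the normalized area measure, a polar-coordinate computation gives $\|z^n\|^2 = \int_{\mathbb{D}} \abs{z}^{2n}\, dA = 2\int_0^1 r^{2n+1}\, dr = \frac{1}{n+1}$. Hence for any $g \in L^2(\mathbb{D}, dA)$ one has the expansion $P(g) = \sum_{n\ge 0}(n+1)\langle g, z^n\rangle\, z^n$, so it suffices to evaluate the coefficients $\langle \overline{z}^m z^k, z^n\rangle$.

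Next I would compute $\langle \overline{z}^m z^k, z^n\rangle = \int_{\mathbb{D}} z^k \overline{z}^{m+n}\, dA$ in polar coordinates $z = re^{i\theta}$; performing the angular integration first, the factor $\int_0^{2\pi} e^{i(k-m-n)\theta}\, d\theta$ vanishes unless $n = k-m$. Thus if $k < m$ every coefficient is zero and $P(\overline{z}^m z^k) = 0$, which is the second case. If $k \ge m$, only the index $n = k-m$ contributes, and the remaining radial integral $2\int_0^1 r^{2k+1}\, dr = \frac{1}{k+1}$ yields $\langle \overline{z}^m z^k, z^{k-m}\rangle = \frac{1}{k+1}$.

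Substituting into the expansion gives $P(\overline{z}^m z^k) = (k-m+1)\cdot\frac{1}{k+1}\, z^{k-m} = \frac{k-m+1}{k+1}\, z^{k-m}$, which is the first case. The argument is entirely elementary; the only point demanding attention is keeping the normalization of $dA$ consistent across the polar-coordinate integrals, since a slip there would change the constant $\frac{k-m+1}{k+1}$.
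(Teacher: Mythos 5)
Your computation is correct: the paper states this lemma without proof as a well-known fact, and your argument (expanding against the orthogonal monomial basis, using $\|z^n\|^2=\frac{1}{n+1}$ for the normalized area measure and the vanishing of the angular integral unless $n=k-m$) is exactly the standard verification, with the constants handled consistently.
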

The following lemma is instrumental for characterizing the kernel and range of Toeplitz operator $T_{\overline{z}^m +f}$, where $f$ is bounded and analytic. The case $m=1$ is considered in \cite{pro19, pro20}.
\begin{lemma}\label{tool1}
Suppose that $g\in L^2_a({\mathbb D})$ and $g(0)= g'(0)=\cdots= g^{(m-1)}(0)=0$ for $m \geq 1$. Then
\begin{align}
 T^*_{z^m} g (z)&=\frac{1}{z^{m+1}}\int^z_0 \big[ w g'(w) -(m-1) g(w)\big] dw \\
               &= \frac{1}{z^m}g(z)-\frac{m}{z^{m+1}} \int^z_0 g(w)dw.
\end{align}
\end{lemma}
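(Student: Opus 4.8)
The plan is to reduce the whole statement to a Taylor‑coefficient computation together with one integration by parts. Since $g\in L^2_a(\mathbb{D})$ and the hypothesis $g(0)=g'(0)=\cdots=g^{(m-1)}(0)=0$ says precisely that the first $m$ Taylor coefficients of $g$ vanish, I would write $g(z)=\sum_{k\geq m}a_k z^k$ and denote by $g_N(z)=\sum_{k=m}^{N}a_k z^k$ the partial sums, which converge to $g$ in $L^2_a(\mathbb{D})$. Because $\lvert \overline z^{\,m}\rvert\le 1$ on $\mathbb{D}$ we get $\overline z^{\,m}g_N\to\overline z^{\,m}g$ in $L^2(\mathbb{D},dA)$, and since $T^*_{z^m}=T_{\overline z^{\,m}}=P(\overline z^{\,m}\,\cdot\,)$ is bounded, $T^*_{z^m}g_N\to T^*_{z^m}g$ in $L^2_a(\mathbb{D})$. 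Applying Lemma \ref{p} term by term to each $g_N$ and letting $N\to\infty$ then gives
\[
T^*_{z^m}g(z)=\sum_{k\geq m}\frac{k-m+1}{k+1}\,a_k z^{k-m},
\]
which is the common value that both right‑hand sides in the statement must equal.

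Next I would check the first displayed formula directly from this series. From $g(w)=\sum_{k\ge m}a_k w^k$ one computes $w g'(w)-(m-1)g(w)=\sum_{k\ge m}(k-m+1)a_k w^k$, a power series vanishing to order at least $m$ at the origin; integrating from $0$ to $z$ along any path in $\mathbb{D}$ (legitimate since the series converges uniformly on compact subsets) yields $\sum_{k\ge m}\frac{k-m+1}{k+1}a_k z^{k+1}$, which vanishes to order at least $m+1$. Hence division by $z^{m+1}$ produces the analytic function $\sum_{k\ge m}\frac{k-m+1}{k+1}a_k z^{k-m}$, i.e.\ exactly $T^*_{z^m}g(z)$. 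This establishes the first equality.

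For the second equality I would simply integrate by parts: $\int_0^z w g'(w)\,dw=z g(z)-\int_0^z g(w)\,dw$, so
\[
\int_0^z\big[w g'(w)-(m-1)g(w)\big]\,dw = z g(z)-m\int_0^z g(w)\,dw,
\]
and dividing by $z^{m+1}$ converts the first formula into $\frac{1}{z^m}g(z)-\frac{m}{z^{m+1}}\int_0^z g(w)\,dw$. (Alternatively, one verifies the second formula directly from the Taylor series of $g$ exactly as in the previous paragraph.)

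I do not expect a real obstacle in this lemma; it is essentially a bookkeeping computation. The only two points deserving a word of care are: (i) the justification that $P$ may be applied term by term to the series for $g$, which is handled by the $L^2$‑convergence of the Taylor partial sums together with the boundedness of $T^*_{z^m}$; and (ii) the observation that the primitive appearing in the formula vanishes to order $m+1$ at the origin, so that the quotient by $z^{m+1}$ is genuinely an analytic function on $\mathbb{D}$ and not merely a formal expression — this is where the hypothesis on the vanishing of $g$ and its first $m-1$ derivatives is actually used.
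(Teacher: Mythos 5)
Your proposal is correct and follows essentially the same route as the paper, which verifies the lemma by expanding $g$ in its power series and applying Lemma \ref{p} term by term; your extra care about $L^2$-convergence of the partial sums and the order of vanishing at the origin simply fills in the details the paper leaves implicit. The integration-by-parts step linking the two displayed formulas is also exactly the intended bookkeeping, so nothing further is needed.
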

\begin{proof}%-----proof
This lemma can be verified by considering the power series expansion of $g$ and then using Lemma \ref{p}.
\end{proof}

For a general bounded linear operator, the following theorem in \cite{Pe} describes the connection between the spectrum and the essential spectrum.
\begin{theorem}\label{picture}
Let $T$ be a bounded linear operator on a Hilbert space $\mathcal{H}$ and $H$ be ``a
hole in $\sigma_e (T)$'' (which is a bounded component of $ \mathbb {C} \backslash \sigma_e (T)$ such that
 \[ \mathrm{ind} (T - \lambda I) = 0, \lambda \in H,\]
then $\lambda$ satisfies one of the three,

(i) $ H \cap \sigma(T) = \emptyset$,

(ii)  $ H \subset \sigma(T)$,

(iii) $ H \cap \sigma(T)$ is a countable set of isolated eigenvalues of $T$, each having finite multiplicity.

\noindent Moreover, the intersection of $ \sigma(T)$ with the unbounded component of $ \mathbb {C} \backslash \sigma_e (T)$ is a countable
set of isolated eigenvalues of $T$, each of which has finite multiplicity.
\end{theorem}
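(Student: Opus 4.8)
The plan is to deduce the statement from two standard pieces of Fredholm theory: the local constancy of the Fredholm index on the Fredholm domain, and the punctured-neighbourhood (analytic Fredholm) theorem describing how $\dim\ker(T-\lambda I)$ varies with $\lambda$.

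First I would record that $\C\setminus\sigma_e(T)$ is exactly the set of $\lambda$ for which $T-\lambda I$ is Fredholm, and that $\lambda\mapsto\ind(T-\lambda I)$ is continuous, hence locally constant, on this open set; in particular it is constant on each connected component. By hypothesis this constant is $0$ on the hole $H$, so for every $\lambda\in H$ the operator $T-\lambda I$ is Fredholm with vanishing index, and therefore $T-\lambda I$ is invertible precisely when $\ker(T-\lambda I)=\{0\}$ (trivial kernel together with index $0$ forces trivial cokernel).

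Next I would apply the punctured-neighbourhood theorem on the connected open set $H$: there exist a relatively closed discrete subset $D\subset H$ and an integer $c\ge 0$ with $\dim\ker(T-\lambda I)=c$ for $\lambda\in H\setminus D$ and $\dim\ker(T-\lambda I)>c$ for $\lambda\in D$. If $c=0$, then $T-\lambda I$ is invertible on $H\setminus D$; when $D=\emptyset$ this yields alternative (i), and when $D\neq\emptyset$ the set $H\cap\sigma(T)=D$ consists of isolated points, each an eigenvalue (nontrivial kernel) of finite multiplicity (Fredholmness), which is alternative (iii). If $c>0$, then every point of $H$ lies in $\sigma(T)$ — the points of $H\setminus D$ because $\ker(T-\lambda I)\neq\{0\}$, and the points of $D$ because there the kernel is even larger — so $H\subset\sigma(T)$, alternative (ii).

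For the final assertion, let $U$ be the unbounded component of $\C\setminus\sigma_e(T)$. Since $\sigma(T)$ is compact, $T-\lambda I$ is invertible, hence of index $0$, for all large $\abs{\lambda}$, so $\ind(T-\lambda I)=0$ on all of $U$ by constancy of the index on components. Repeating the dichotomy above with $U$ in place of $H$, alternative (ii) is impossible because $U$ is unbounded while $\sigma(T)$ is bounded, so $\sigma(T)\cap U$ is contained in the discrete set supplied by the punctured-neighbourhood theorem and is therefore a countable set of isolated eigenvalues of finite multiplicity. The one genuinely non-elementary input, and the step I expect to be the crux, is the punctured-neighbourhood theorem itself — that $\dim\ker(T-\lambda I)$ is constant off a discrete set and jumps only upward on it; the remainder is bookkeeping with the index and the definition of $\sigma(T)$, and this is precisely where one invokes \cite{Pe}.
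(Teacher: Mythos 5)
Your proposal is correct, but note that the paper does not prove this statement at all: Theorem \ref{picture} is quoted verbatim as a known result from Pearcy's CBMS lecture notes \cite{Pe}, so there is no in-paper argument to compare against. Your reconstruction is the standard one and it works: local constancy of the Fredholm index on $\C\setminus\sigma_e(T)$ reduces everything to the behaviour of $\lambda\mapsto\dim\ker(T-\lambda I)$, and the punctured-neighbourhood theorem gives, on the connected open set $H$, a constant generic value $c$ together with a relatively closed discrete (hence countable) exceptional set $D$ where the kernel dimension strictly exceeds $c$; the trichotomy (i)/(ii)/(iii) is then exactly your case split $c=0,D=\emptyset$; $c>0$; $c=0,D\neq\emptyset$, and for the unbounded component the compactness of $\sigma(T)$ both forces index $0$ there and rules out case (ii). Two small points you should make explicit if you write this up: first, in case (iii) the points of $D$ are isolated in $\sigma(T)$ (not merely in $D$) because off $D$ the operator $T-\lambda I$ is invertible in a full neighbourhood inside $H$; second, if ``multiplicity'' is read as algebraic multiplicity (rank of the Riesz projection) rather than $\dim\ker(T-\lambda I)$, you need the standard fact that an isolated spectral point at which $T-\lambda I$ is Fredholm is a Riesz point, i.e.\ a pole of the resolvent of finite rank — this also follows from Fredholm theory but is not literally ``Fredholmness'' alone. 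With those remarks, your argument is a complete and correct proof of the cited theorem.
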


The following lemma from \cite{MC1979,  ess spe, zhu} characterizes the essential spectra of Toeplitz operators on the Bergman space.
\begin{lemma}\label{spec}
Assume $\varphi (z) \in C(\overline{ \mathbb {D}})$. Then $\sigma_e (T_\varphi) = \varphi (  \mathbb {T})$.
\end{lemma}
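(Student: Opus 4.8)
\emph{Proof plan.} The plan is to pass to the Calkin algebra. Write $\pi$ for the canonical quotient map from the bounded operators on $L_a^2(\mathbb{D})$ onto the Calkin algebra, so that by Atkinson's theorem $\sigma_e(T_\varphi)$ is exactly the spectrum of $\pi(T_\varphi)$ there. Everything will be reduced to two compactness facts about continuous symbols: \textbf{(i)} for $u,v\in C(\overline{\mathbb{D}})$ the semicommutator $T_uT_v-T_{uv}$ is compact; and \textbf{(ii)} for $u\in C(\overline{\mathbb{D}})$, the operator $T_u$ is compact if and only if $u$ vanishes on $\mathbb{T}$. Granting these, the assignment $u\mapsto\pi(T_u)$ on $C(\overline{\mathbb{D}})$ is linear, multiplicative by (i), and $\ast$-preserving because $T_u^{*}=T_{\bar u}$; by (ii) its kernel is precisely $\{u:u|_{\mathbb{T}}=0\}$, which is the kernel of the restriction map $C(\overline{\mathbb{D}})\to C(\mathbb{T})$. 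Hence it descends to an injective unital $\ast$-homomorphism $\Psi$ from $C(\mathbb{T})$ into the Calkin algebra with $\Psi(u|_{\mathbb{T}})=\pi(T_u)$. An injective $\ast$-homomorphism between $C^\ast$-algebras is isometric and therefore spectrum-preserving, so $\sigma_e(T_\varphi)=\sigma(\pi(T_\varphi))=\sigma_{C(\mathbb{T})}(\varphi|_{\mathbb{T}})=\varphi(\mathbb{T})$, which is the assertion.

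It remains to establish (i) and (ii). For (ii), suppose first that $u\in C(\overline{\mathbb{D}})$ with $u|_{\mathbb{T}}=0$. Given $\varepsilon>0$, choose $r<1$ with $|u|<\varepsilon$ on $\{r<|z|<1\}$ and split $u=u\chi_{\{|z|\le r\}}+u\chi_{\{|z|>r\}}$. The Toeplitz operator of the second summand has norm at most $\varepsilon$, since the Toeplitz norm is dominated by the sup-norm of the symbol; the Toeplitz operator of the first summand factors through the map $f\mapsto\chi_{\{|z|\le r\}}f$ from $L_a^2(\mathbb{D})$ into $L^2(\mathbb{D})$, which is compact because the unit ball of $L_a^2(\mathbb{D})$ restricts on $\{|z|\le r\}$ to a normal family, hence to a precompact subset of $C(\{|z|\le r\})$ and a fortiori of $L^2$. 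Thus $T_u$ lies within $\varepsilon$ of the compacts for every $\varepsilon>0$, so it is compact. Conversely, if $u(\zeta)\neq 0$ for some $\zeta\in\mathbb{T}$, take $z_j\to\zeta$ in $\mathbb{D}$ and the normalized reproducing kernels $k_{z_j}$: these are weakly null with $\|k_{z_j}\|=1$, yet $\langle T_u k_{z_j},k_{z_j}\rangle$ is the Berezin transform $\widetilde u(z_j)\to u(\zeta)\neq 0$, which is incompatible with compactness of $T_u$.

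Step (i) is where I expect the real work. With the Hankel operator $H_vf=(I-P)(vf)$ one has the identity $T_uT_v-T_{uv}=-H_{\bar u}^{*}H_v$ together with the uniform bound $\|T_uT_v-T_{uv}\|\le\|u\|_\infty\|v\|_\infty$. Since the polynomials in $z$ and $\bar z$ are dense in $C(\overline{\mathbb{D}})$ by Stone--Weierstrass and $T_w$ depends norm-continuously on $w\in L^\infty$, it suffices to prove that $H_v$ is compact when $v=z^a\bar z^b$. For such a monomial one computes directly from Lemma \ref{p} that $H_v z^n=\bar z^b z^{n+a}-\frac{n+a-b+1}{n+a+1}z^{n+a-b}$ for $n+a\ge b$, whence $\|(I-P)(vz^n)\|$ decays like a fixed negative power of $n$ (in fact $\|(I-P)(vz^n)\|^{2}\sim b^{2}n^{-3}$), fast enough that $H_v$ is Hilbert--Schmidt; consequently $H_{\bar u}^{*}H_v$ is compact. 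Carrying out this decay estimate is the only non-soft ingredient, and is the part I flag as the main obstacle; the rest is formal. (One may also avoid the $C^\ast$-algebra language and argue directly: for $\lambda\notin\varphi(\mathbb{T})$, Tietze-extend $(\varphi-\lambda)^{-1}$ from a neighborhood of $\mathbb{T}$ in $\overline{\mathbb{D}}$ to some $\psi\in C(\overline{\mathbb{D}})$; then (i) and (ii) show $T_\psi$ is a two-sided parametrix for $T_{\varphi-\lambda}$, so $T_{\varphi-\lambda}$ is Fredholm, while for $\lambda\in\varphi(\mathbb{T})$ the reproducing-kernel sequence above shows $T_{\varphi-\lambda}^{*}$ is not bounded below modulo finite rank, so $T_{\varphi-\lambda}$ is not Fredholm.)
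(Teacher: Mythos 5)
Your argument is correct, but note that the paper does not prove this lemma at all: it is quoted from the literature (McDonald--Sundberg, Stroethoff--Zheng, Zhu), so there is no in-paper proof to match. What you have written is essentially the standard proof from those references: the symbol map $u\mapsto\pi(T_u)$ into the Calkin algebra, multiplicativity modulo compacts via the Hankel factorization $T_uT_v-T_{uv}=-H_{\bar u}^{*}H_v$, the compactness criterion ``$T_u$ compact $\iff u|_{\mathbb T}=0$'' proved by the cut-off/normal-family argument and the Berezin transform on normalized reproducing kernels, and then spectral permanence for the induced injective unital $*$-homomorphism $C(\mathbb T)\to\mathcal{B}/\mathcal{K}$. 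All the individual steps check out, including the identity $T_uT_v-T_{uv}=-H_{\bar u}^{*}H_v$ and the computation $\|H_{z^a\bar z^b}z^n\|^2=\frac{1}{n+a+b+1}-\frac{n+a-b+1}{(n+a+1)^2}=\frac{b^2}{(n+a+1)^2(n+a+b+1)}$, which is exactly your claimed $\sim b^2n^{-3}$. The one point you should tighten (it is not a gap, just an omitted line): Hilbert--Schmidt-ness requires summing over an \emph{orthonormal} basis, and in $L^2_a(\mathbb D)$ that basis is $e_n=\sqrt{n+1}\,z^n$, so the relevant series is $\sum_n(n+1)\|H_vz^n\|^2\lesssim\sum_n n^{-2}<\infty$; with the $n^{-3}$ decay this still converges, so the conclusion stands, but as written the normalization factor is silently dropped. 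Your parenthetical parametrix alternative is also sound, provided that for $\lambda\in\varphi(\mathbb T)$ you argue via $\|T_{\varphi-\lambda}k_{z_j}\|^2=\langle T_{\overline{\varphi-\lambda}}T_{\varphi-\lambda}k_{z_j},k_{z_j}\rangle\to|\varphi(\zeta)-\lambda|^2=0$ (using the semicommutator compactness again) rather than only the Berezin transform of $T_{\varphi-\lambda}$ itself; the main $C^*$-algebra route needs no such care since the range of $\varphi|_{\mathbb T}$ is obtained in one stroke.
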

\noindent Furthermore, the Fredholm index of $T_\varphi$ is
\begin{eqnarray}
\mathrm{ind}(T_\varphi) = - \text{wind}( \varphi(  \mathbb {T}), 0 )= -\frac{1}{2\pi \text{i} }\int_{\varphi ( \mathbb T)} \frac{dw}{w},
\end{eqnarray}
where wind$( \varphi( \mathbb {T}, 0) )$ means the winding number of the closed oriented curve  $\varphi(\mathbb T$ surrounding $0$.

Combining Theorem \ref{picture} and Lemma \ref{spec}, one arrives at the following description of the spectra of Toeplitz operators.
\begin{theorem}\label{specT}
Let $\varphi$ be continuous on the closed unit disc $\overline{\D}$. Then
\begin{align*}
 \sigma (T_{\varphi}) &= \varphi (\mathbb T ) \bigcup \big\{ \lambda \in \mathbb {C}: \lambda \notin   \sigma_e(T_{\varphi}), \ind (T_{\varphi} - \lambda I ) \neq 0 \big  \}\\
 & \bigcup \big\{ \lambda \in \mathbb {C}:   \lambda \in  \sigma_p(T_{\varphi}),  \lambda \notin  \sigma_e(T_{\varphi}), \ind(T_{\varphi}- \lambda I)=0 \big\}.
\end{align*}
\end{theorem}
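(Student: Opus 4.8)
The plan is to reduce the statement to elementary Fredholm theory, using the two facts already available: Lemma \ref{spec}, which identifies $\sigma_e(T_\varphi)=\varphi(\mathbb T)$ for $\varphi\in C(\overline{\mathbb D})$, and Theorem \ref{picture}, which governs $\sigma(T_\varphi)$ away from its essential spectrum. Note first that $T_\varphi$ is bounded, since $\varphi\in C(\overline{\mathbb D})\subseteq L^\infty(\mathbb D,dA)$, so all the spectral notions involved are well defined.

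First I would record that the essential spectrum is always contained in the spectrum, so $\varphi(\mathbb T)=\sigma_e(T_\varphi)\subseteq\sigma(T_\varphi)$; this accounts for the first term of the claimed union, and it also shows that every $\lambda$ occurring in the second or third term lies outside $\varphi(\mathbb T)$, so the three sets overlap only in the trivial way. Next I would take an arbitrary $\lambda\in\sigma(T_\varphi)\setminus\varphi(\mathbb T)$. Since $\lambda\notin\sigma_e(T_\varphi)$ by Lemma \ref{spec}, the operator $T_\varphi-\lambda I$ is Fredholm, and I split into two cases according to its index. If $\ind(T_\varphi-\lambda I)\neq 0$, then $\lambda$ belongs to the second set by definition. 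If $\ind(T_\varphi-\lambda I)=0$, then $T_\varphi-\lambda I$ is invertible if and only if it is injective (for a Fredholm operator of index zero the kernel is trivial exactly when the codimension of the range is zero, i.e. exactly when the operator is bijective); since $\lambda\in\sigma(T_\varphi)$ the operator is not invertible, hence not injective, so $\lambda\in\sigma_p(T_\varphi)$ and $\lambda$ lies in the third set.

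For the reverse inclusion I would simply note that both the second and the third sets consist of non-invertibility points: a Fredholm operator of nonzero index can never be invertible, and an operator with nontrivial kernel can never be invertible. Combining the two inclusions with $\varphi(\mathbb T)\subseteq\sigma(T_\varphi)$ yields the claimed identity. At the end, Theorem \ref{picture} can be invoked to upgrade this to the qualitative statement that the points of the third set, as well as the points of $\sigma(T_\varphi)$ in the unbounded component of $\mathbb C\setminus\varphi(\mathbb T)$, are isolated eigenvalues of finite multiplicity; but this refinement is not required for the displayed formula.

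I do not expect a genuine obstacle here: the argument is bookkeeping with Fredholm indices. The only step that deserves care is the index-zero case, where one uses that a Fredholm operator with vanishing index is invertible precisely when it is one-to-one, so that non-invertibility at such a $\lambda$ is detected entirely by the point spectrum.
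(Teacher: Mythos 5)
Your proof is correct and matches the paper's intent: the paper offers no detailed argument for this theorem, simply asserting that it follows by ``combining Theorem \ref{picture} and Lemma \ref{spec}'', and your case split of $\sigma(T_\varphi)\setminus\varphi(\mathbb T)$ according to the Fredholm index is exactly the bookkeeping that combination amounts to. Your further observation that the displayed identity only requires the elementary fact that an index-zero Fredholm operator is invertible precisely when it is injective, with Theorem \ref{picture} needed only for the refinement that such spectral points are isolated eigenvalues of finite multiplicity, is accurate and, if anything, slightly cleaner than the paper's framing.
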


\subsection{Difference Equations}

A difference equation (also called a recurrence equation) express a term in the sequence (e.g. Fibonacci sequence) as a function of preceding terms. They are discrete analogs of differential equations and are essential for modeling and analyzing discrete-time processes. In this paper, a number of spectral properties of the Toeplitz operators are discovered by leveraging the Poincar\'{e}'s theorem in linear difference equations. For an introduction to this subject, we refer the reader to \cite{KP}. Making use of above theorems in the study of Toeplitz operators is a technical innovation of this paper.
\begin{definition}
A one-parameter homogeneous linear equation
\begin{equation}\label{1.0}
  u(t+d)+p_{d-1}(t)u(t+d-1)+\cdots+p_0(t)u(t)=0,\ \ \ t\geq 0,
\end{equation}
is said to be of \emph{Poincar\'{e}} if
\[ \lim_{t\rightarrow \infty} p_k(t)=c_k, \ \ \ 0\leq k\leq d-1,\] i.e., if every coefficient function $p_k$ converge to a finite constant as $t$ goes to infinity.
\end{definition}
The following theorems are Theorems 5.1, 5.2 and 5.3 in \cite{KP}.
\begin{theorem}[Poincar\'{e}'s Theorem]\label{thm1.1}
Assume Equation \eqref{1.0} is of Poincar\'{e}'s type, and the roots $\lambda_1,\cdots,\lambda_d$ of $\lambda^d+c_{d-1}\lambda^{d-1}+\cdots +c_0=0$ have distinct moduli. Then every nontrivial solution $u$ of Equation \eqref{1.0} satisfies
\[ \lim_{t\rightarrow \infty} \frac{u(t+1)}{u(t)}=\lambda_i\ \text{for some}\ i.\]
\end{theorem}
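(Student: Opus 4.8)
The plan is to convert the scalar recurrence into a first–order linear system, diagonalize its limiting coefficient matrix, and then analyze the resulting asymptotically diagonal system by induction on the order $d$, peeling off the root of largest modulus at each step.

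First I would vectorize. Put $U(t)=\big(u(t),u(t+1),\dots,u(t+d-1)\big)^{\top}$, so that \eqref{1.0} becomes $U(t+1)=C(t)U(t)$ with $C(t)$ the companion matrix having last row $\big(-p_0(t),\dots,-p_{d-1}(t)\big)$. Since the equation is of Poincar\'{e} type, $C(t)\to C$, the companion matrix of $\lambda^d+c_{d-1}\lambda^{d-1}+\cdots+c_0$, whose eigenvalues are $\lambda_1,\dots,\lambda_d$. Distinct moduli force these to be pairwise distinct, hence $C=V\Lambda V^{-1}$ with $\Lambda=\mathrm{diag}(\lambda_1,\dots,\lambda_d)$ and $V$ the Vandermonde matrix whose $i$-th column is the eigenvector $(1,\lambda_i,\dots,\lambda_i^{d-1})^{\top}$; substituting $y(t)=V^{-1}U(t)$ gives
\[
 y(t+1)=\big(\Lambda+R(t)\big)y(t),\qquad R(t):=V^{-1}\big(C(t)-C\big)V\longrightarrow 0 .
\]
Because the first row of $V$ is $(1,\dots,1)$, we have $u(t)=\sum_{i=1}^{d}y_i(t)$, so once we know some coordinate $y_k$ eventually dominates the rest and satisfies $y_k(t+1)/y_k(t)\to\lambda_k$, it follows that $u(t+1)/u(t)\to\lambda_k$. (If $c_0=0$ one root is $0$; this causes no essential difficulty, and for transparency one may first assume $c_0\neq0$.) Thus it suffices to prove the statement for this asymptotically diagonal system, with $0\le|\lambda_1|<\cdots<|\lambda_d|$.

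I would then argue by induction on $d$. For $d=1$ the system is $y(t+1)=(\lambda_1+R(t))y(t)$, whence $y(t)\neq0$ for large $t$ and $y(t+1)/y(t)\to\lambda_1$. For the inductive step, split $y=(Y',y_d)$ with $Y'=(y_1,\dots,y_{d-1})$ and set $\xi(t)=|y_d(t)|/\|Y'(t)\|$ (with $\xi(t)=+\infty$ when $Y'(t)=0$). The heart of the argument is a \emph{dichotomy}: for every nontrivial solution, either $\xi(t)\to+\infty$ — in which case $y_d$ dominates and the $d$-th scalar equation gives $y_d(t+1)/y_d(t)=\lambda_d+R_{dd}(t)+\sum_{j<d}R_{dj}(t)\,y_j(t)/y_d(t)\to\lambda_d$, so $u(t+1)/u(t)\to\lambda_d$ — or $\xi(t)\to0$, in which case $y_d(t)=o(\|Y'(t)\|)$, so $Y'$ obeys $Y'(t+1)=(\Lambda'+\widetilde R'(t))Y'(t)$ with $\Lambda'=\mathrm{diag}(\lambda_1,\dots,\lambda_{d-1})$ and $\widetilde R'(t)\to0$, and the induction hypothesis applies. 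To establish the dichotomy, one derives from the two coordinate blocks the Riccati-type estimate
\[
 \xi(t+1)\ \ge\ \frac{(|\lambda_d|-\delta_t)\,\xi(t)-\delta_t}{(|\lambda_{d-1}|+\delta_t)+\delta_t\,\xi(t)},\qquad \delta_t\to0,
\]
together with the corresponding upper bound; fixing $\rho$ with $|\lambda_{d-1}|<\rho<|\lambda_d|$ — the step where separation of moduli enters decisively — one shows that whenever $\xi(t)$ lies in a fixed compact subinterval of $(0,\infty)$ it is eventually multiplied by a factor $>1$, so it cannot accumulate there; combined with the behaviour for very large and very small $\xi$, this forces $\xi(t)\to0$ or $\xi(t)\to+\infty$. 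The cases where $Y'(t)$ or $y_d(t)$ vanishes for some $t$ are routine, since a nontrivial solution can never have both vanish at once.

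The main obstacle is precisely this dichotomy — ruling out solutions whose ratio oscillates forever between different eigendirections — and it is here that \emph{distinct moduli} (not merely distinct roots) is indispensable: with $|\lambda_d|>|\lambda_{d-1}|$ the renormalized ratio $\xi$ behaves, up to vanishing perturbations, like a one-dimensional map with an attracting fixed point at $0$ and a repelling one at $\infty$, whereas equal moduli would permit genuinely non-convergent ratios. Once the dichotomy and the induction are in place, transporting the conclusion back through $U=Vy$ and $u=\sum_i y_i$ completes the proof.
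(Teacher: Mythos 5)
There is nothing in the paper to compare against: the paper does not prove this statement at all, but quotes it (together with the two Perron theorems) as Theorems 5.1--5.3 of Kelley--Peterson \cite{KP}. So your proposal can only be judged on its own merits, and on those merits it is essentially the classical proof: vectorize by the companion matrix, diagonalize the limiting matrix by the Vandermonde matrix of the roots (distinct moduli give distinct roots), pass to the asymptotically diagonal system $y(t+1)=(\Lambda+R(t))y(t)$, and run a dominance/dichotomy argument on the ratio $\xi(t)=|y_d(t)|/\|Y'(t)\|$, inducting on $d$. The skeleton is sound: the lower Möbius-type bound $\xi(t+1)\ge\bigl((|\lambda_d|-\delta_t)\xi(t)-\delta_t\bigr)/\bigl((|\lambda_{d-1}|+\delta_t)+\delta_t\xi(t)\bigr)$ is correct, the bounding map is increasing in $\xi$, and for any compact $[\eta,K]\subset(0,\infty)$ it eventually multiplies $\xi$ by a factor $q>1$, which (together with the forward invariance of ``$\xi$ large'' that the same monotone bound gives) yields the dichotomy $\xi(t)\to0$ or $\xi(t)\to\infty$; the two horns then give $\lambda_d$ or the inductive reduction to $\Lambda'$, and the identity $u=\sum_i y_i$ (first row of the Vandermonde matrix) transports the conclusion back, since the dominant coordinate $y_k$ makes $u(t)=y_k(t)(1+o(1))$.

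Three small corrections/caveats. First, your heuristic description is backwards: since $\xi$ is expanded by roughly $|\lambda_d|/|\lambda_{d-1}|>1$, the fixed point at $0$ is repelling and the one at $\infty$ is attracting (solutions with $\xi\to0$ are the ``stable-set'' solutions, not generic ones); the escape-from-compacta argument you actually use is the right one, so this is only a slip of wording. Second, the ``corresponding upper bound'' on $\xi(t+1)$ is not needed and is awkward to state (it would require a lower bound on $\|Y'(t+1)\|$, problematic when $|\lambda_1|$ is small or zero); the lower bound alone drives the dichotomy. Third, in the reduction $Y'(t+1)=(\Lambda'+\widetilde R'(t))Y'(t)$ you should note that $\widetilde R'(t)$ is built using $Y'(t)$ itself (e.g.\ a rank-one correction of norm $O(\delta_t(1+\xi(t)))$), valid because $\xi(t)\to0$ forces $Y'(t)\neq0$ eventually; and the conclusion $u(t+1)/u(t)\to\lambda_k$ tacitly needs $u(t)\neq0$ for large $t$, which your dominance gives unless the solution becomes eventually zero --- possible only if some $p_0(t)=0$, a degenerate case that the theorem's statement (in the paper and in \cite{KP}) itself glosses over and that Perron's theorem excludes by assuming $p_0$ non-vanishing. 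With these minor repairs the argument is complete.
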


To  understand the solution structure of Equation \eqref{1.0}, the following two theorems are introduced.

\begin{theorem}[Perron's Theorem]
In Theorem \ref{thm1.1}, if the function $p_0$ is non-vanishing, then Equation \eqref{1.0} has $d$ independent solutions $u_1,\cdots, u_d$ that satisfy
 \[ \lim_{t\rightarrow \infty} \frac{u_i(t+1)}{u_i(t)}=\lambda_i,\ \ \ i=1,\cdots, d. \]
\end{theorem}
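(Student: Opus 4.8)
The plan is to linearize Equation \eqref{1.0}, diagonalize the limiting coefficient matrix, and then construct the $d$ solutions one eigenvalue at a time by a contraction mapping --- using the non-vanishing of $p_0$ to keep the linearized recurrence invertible and the distinct-moduli hypothesis to produce an exponential dichotomy. First I would set $X(t)=\big(u(t),u(t+1),\dots,u(t+d-1)\big)^{\mathsf T}$, so that \eqref{1.0} becomes $X(t+1)=A(t)X(t)$, where $A(t)$ is the companion matrix of $p_0(t),\dots,p_{d-1}(t)$. Since $\det A(t)=\pm p_0(t)\ne 0$, each $A(t)$ is invertible, so the recurrence runs in both directions, no nontrivial solution vanishes at any point, and a solution defined on a tail $[N,\infty)$ propagates back to $t=0$; this is precisely where the hypothesis $p_0\ne 0$ enters. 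By the Poincar\'{e} assumption $A(t)\to A$, the companion matrix of $\lambda^d+c_{d-1}\lambda^{d-1}+\cdots+c_0$, and since the $\lambda_i$ are distinct, $A=V\Lambda V^{-1}$ with $\Lambda=\mathrm{diag}(\lambda_1,\dots,\lambda_d)$ and $V$ the Vandermonde matrix whose $i$-th column is the eigenvector $(1,\lambda_i,\dots,\lambda_i^{\,d-1})^{\mathsf T}$ --- note its first entry is $1$. Putting $Y(t)=V^{-1}X(t)$ gives $Y(t+1)=(\Lambda+R(t))Y(t)$ with $R(t)=V^{-1}(A(t)-A)V\to 0$. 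It now suffices to produce, for each $i$, a nonzero solution $Y_i$ whose $i$-th component $\rho_i$ satisfies $\rho_i(t+1)/\rho_i(t)\to\lambda_i$ while the other components are $o(\rho_i(t))$: transporting back by $V$ and reading off the first coordinate gives $u_i(t)=\rho_i(t)(1+o(1))$, hence $u_i(t+1)/u_i(t)\to\lambda_i$, and the distinct growth rates $\abs{\lambda_i}$ force the $u_i$ to be linearly independent.

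To construct $Y_i$, reorder so that $\abs{\lambda_1}>\cdots>\abs{\lambda_d}$, fix $i$, and seek $Y_i(t)=\rho(t)\big(e_i+z(t)\big)$ with the $i$-th entry of $z$ identically zero and $z(t)\to 0$. The $i$-th coordinate of the recurrence reads $\rho(t+1)=\rho(t)\big(\lambda_i+[R(t)(e_i+z(t))]_i\big)$, which already gives $\rho(t+1)/\rho(t)\to\lambda_i$ as soon as $z$ is bounded; substituting $\rho(t)/\rho(t+1)=\big(\lambda_i+[R(t)(e_i+z(t))]_i\big)^{-1}$ into the remaining coordinates turns them into
\[
z_j(t+1)=\frac{\lambda_j z_j(t)+[R(t)(e_i+z(t))]_j}{\lambda_i+[R(t)(e_i+z(t))]_i}\qquad(j\ne i),
\]
an affine-in-$z$ recurrence whose linear part is a small perturbation of multiplication by $\lambda_j/\lambda_i$. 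For $j<i$ one has $\abs{\lambda_j/\lambda_i}>1$ and solves it by a convergent backward summation; for $j>i$ one has $\abs{\lambda_j/\lambda_i}<1$ and solves it by a forward summation with zero initial data. Both summations are dominated by geometric series whose ratios are bounded strictly away from $1$ --- here the distinct moduli are essential --- so, choosing $N$ with $\sup_{t\ge N}\|R(t)\|$ small, the induced map on a small ball of bounded $\C^{d-1}$-valued sequences on $[N,\infty)$ is a contraction. Its fixed point is $z$; then $\rho$ is recovered from the scalar recurrence with $\rho(N)=1$ (it never vanishes, since $\abs{[R(t)(e_i+z(t))]_i}<\abs{\lambda_i}$ for $t\ge N$), and $Y_i$ is extended to all $t\ge 0$ using invertibility of $\Lambda+R(t)$.

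The main obstacle is the borderline $i$-th coordinate. Looking instead for $Y_i(t)=\lambda_i^{\,t}(e_i+z(t))$ would make the $i$-th equation read $z_i(t+1)=z_i(t)+\lambda_i^{-1}[R(t)(e_i+z(t))]_i$, a recurrence that is neither contracting nor expanding and can only be solved under $\sum_t\|R(t)\|<\infty$ --- the (Levinson-type) refinement we are \emph{not} allowed to assume. Letting the scalar factor $\rho(t)$ be free, with $\rho(t+1)/\rho(t)\to\lambda_i$ instead of $\rho(t)=\lambda_i^{\,t}$, absorbs that coordinate and leaves a genuine stable/unstable dichotomy on the other $d-1$ coordinates, where $R(t)\to 0$ alone suffices. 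Once all $Y_i$ are built, the relation $\rho_i(t+1)/\rho_i(t)\to\lambda_i$ gives $\rho_j(t)/\rho_i(t)\to 0$ and hence $u_j(t)/u_i(t)\to 0$ whenever $j>i$; so a dependence $\sum_i c_i u_i\equiv 0$, on dividing by $u_{i_0}(t)$ for $i_0$ the least index with $c_{i_0}\ne 0$, forces $c_{i_0}=0$, a contradiction; hence the $d$ solutions are linearly independent.
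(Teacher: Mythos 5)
The paper itself offers no proof of this statement: it is imported verbatim as part of Theorems 5.1--5.3 of Kelley--Peterson \cite{KP}, so there is no internal argument to compare yours against; the comparison is with the standard proofs in that literature. Your route --- companion-matrix linearization $X(t+1)=A(t)X(t)$, diagonalization of the limit matrix by the Vandermonde matrix, the perturbed system $Y(t+1)=(\Lambda+R(t))Y(t)$ with $R(t)\to 0$, and for each $i$ a solution of the form $\rho(t)(e_i+z(t))$ built by forward summation on the stable coordinates, backward summation on the unstable ones, and a contraction on a small ball of bounded sequences --- is exactly the classical Perron/Coffman dichotomy argument. The structural points are handled correctly: $p_0\neq 0$ gives invertibility of $A(t)$ (hence the extension of a tail solution back to $t=0$), the distinct moduli give dichotomy ratios bounded away from $1$, letting the scalar factor $\rho(t)$ float instead of forcing $\lambda_i^{\,t}$ is the right way to avoid any summability assumption on $R$, and reading off $u_i$ from the first coordinate uses that the Vandermonde eigenvectors have first entry $1$.

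Two points need tightening. First, the contraction only yields $z$ bounded and small; for the conclusion $u_i(t+1)/u_i(t)\to\lambda_i$ you genuinely need $z(t)\to 0$ (otherwise the factor $\bigl(1+(Vz(t+1))_1\bigr)/\bigl(1+(Vz(t))_1\bigr)$ need not tend to $1$). You state $z(t)\to 0$ as part of the ansatz but never verify it; it does follow by feeding $\Vert R(t)\Vert\to 0$ back into the fixed-point formulas (geometric convolutions for $j>i$, geometric tails $\sum_{s\ge t}\theta^{\,s-t}\Vert R(s)\Vert$ for $j<i$), and this step should be written out. Second, and more substantively, the hypothesis is only that the coefficient function $p_0(t)$ is non-vanishing; its limit $c_0$ may be $0$, in which case exactly one root, say $\lambda_d$, equals $0$ and the theorem still asserts a solution with ratio tending to $0$. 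For that index your scheme breaks down: you divide by $\lambda_i+[R(t)(e_i+z(t))]_i$, which now tends to $0$, the ratios $\lambda_j/\lambda_i$ are unbounded, the safeguard $\abs{[R(t)(e_i+z(t))]_i}<\abs{\lambda_i}$ is vacuous, and $\rho$ may vanish, destroying the ansatz. So as written your proof covers the case $c_0\neq 0$; the zero-root index requires a separate argument (as in the full proofs of Perron's second theorem). A minor slip: ``no nontrivial solution vanishes at any point'' is true of the vector $X(t)$, not of the scalar $u(t)$.
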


\begin{theorem}[Perron's Theorem]\label{thm1.3}
Suppose $\lim_{t\rightarrow \infty} \frac{u(t+1)}{u(t)}=\lambda$,
\begin{enumerate}
  \item if $\lambda \neq 0$, then $u(t)=\pm \lambda^t e^{z(t)}$  with $z(t)\ll t$,~~~($t\rightarrow \infty$);
  \item if $\lambda = 0$, then $|u(t)|= e^{-z(t)}$  with $z(t)\gg t$,~~~($t\rightarrow \infty$).
\end{enumerate}
\end{theorem}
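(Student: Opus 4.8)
The plan is to convert the multiplicative hypothesis into an additive one by passing to $\log\abs{u(t)}$. The hypothesis that $u(t+1)/u(t)$ tends to a finite limit is only meaningful once $u(t)\neq0$ for all sufficiently large $t$, say for $t\ge N$; fixing such an $N$ and telescoping gives
\[
u(t)=u(N)\prod_{s=N}^{t-1}\frac{u(s+1)}{u(s)},\qquad t\ge N,
\]
so, setting $r(s):=\log\bigl\lvert u(s+1)/u(s)\bigr\rvert$, one has $\log\abs{u(t)}=\log\abs{u(N)}+\sum_{s=N}^{t-1}r(s)$. Throughout I read $z(t)\ll t$ as $z(t)=o(t)$ and $z(t)\gg t$ as $z(t)/t\to\infty$ as $t\to\infty$.

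For statement (1), with $\lambda\neq0$ we have $r(s)\to\log\abs{\lambda}$, so the Ces\`{a}ro mean theorem (equivalently Stolz--Ces\`{a}ro) yields $\frac1t\sum_{s=N}^{t-1}r(s)\to\log\abs{\lambda}$, hence $\log\abs{u(t)}=t\log\abs{\lambda}+o(t)$; equivalently $\abs{u(t)}=\abs{\lambda}^{t}e^{z(t)}$ with $z(t):=\log\abs{u(t)}-t\log\abs{\lambda}=o(t)$. To recover the sign in the form $u(t)=\pm\lambda^{t}e^{z(t)}$, I would use that $u(t+1)/(\lambda u(t))\to1$, so the consecutive ratios of $u(t)/\lambda^{t}$ are eventually positive and hence $u(t)/\lambda^{t}$ is eventually of one sign; absorbing that sign gives the claimed form for $t$ large. (If $u$ is permitted to be complex-valued, the same argument applied to $\arg\bigl(u(t)/\lambda^{t}\bigr)$ replaces $\pm$ by a unimodular limiting constant.)

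For statement (2), with $\lambda=0$ we have $r(s)\to-\infty$. Given $M>0$, choose $N_M\ge N$ with $r(s)<-M$ for all $s\ge N_M$; then for $t>N_M$,
\[
\log\abs{u(t)}<\log\abs{u(N_M)}-M\,(t-N_M),
\]
so $\limsup_{t\to\infty}\frac1t\log\abs{u(t)}\le-M$, and since $M$ is arbitrary, $\frac1t\log\abs{u(t)}\to-\infty$. Putting $z(t):=-\log\abs{u(t)}$ gives $\abs{u(t)}=e^{-z(t)}$ with $z(t)/t\to\infty$, i.e. $z(t)\gg t$.

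I do not anticipate a genuine obstacle here: the argument is a soft one, resting only on the Ces\`{a}ro mean theorem and elementary monotone estimates of partial sums. The sole points that deserve a word of care are the observation that any $u$ for which the hypothesis makes sense is eventually nonzero (so the telescoping product and the logarithms are legitimate), and, in case (1), the bookkeeping of the sign or argument of $u(t)/\lambda^{t}$; neither of these is more than routine once the modulus asymptotics are in place.
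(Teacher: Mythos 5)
Your argument is correct, and it is worth noting that the paper itself gives no proof of this statement at all: Theorem \ref{thm1.3} is quoted verbatim as Theorem 5.3 of the Kelley--Peterson book \cite{KP}, so there is nothing in the paper to compare against step by step. What you supply is a complete, self-contained elementary proof: eventual nonvanishing of $u$, the telescoping identity $\log\abs{u(t)}=\log\abs{u(N)}+\sum_{s=N}^{t-1}r(s)$ with $r(s)=\log\abs{u(s+1)/u(s)}$, Ces\`aro/Stolz to get $\tfrac1t\log\abs{u(t)}\to\log\abs{\lambda}$ in case (1) (hence $z(t)=\log\abs{u(t)}-t\log\abs{\lambda}=o(t)$), the eventual constancy of the sign of $u(t)/\lambda^{t}$ from $u(t+1)/(\lambda u(t))\to 1$, and the straightforward estimate $\tfrac1t\log\abs{u(t)}\to-\infty$ in case (2). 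This matches the intended reading of $z(t)\ll t$ and $z(t)\gg t$ and is exactly the standard route to Perron's growth estimate, so the only effect of your write-up is to make an imported result self-contained. One small caution: your parenthetical claim for complex-valued $u$, that the same argument applied to $\arg\bigl(u(t)/\lambda^{t}\bigr)$ produces a \emph{limiting} unimodular constant, is not right as stated --- the increments $\arg\bigl(u(t+1)/(\lambda u(t))\bigr)$ tend to $0$ but their partial sums may diverge (e.g.\ $u(t)=\lambda^{t}e^{\mathrm{i}\sqrt{t}}$), so no limit of the unimodular factor need exist; in the complex case one simply writes $u(t)=\lambda^{t}e^{z(t)}$ with complex $z(t)=o(t)$, the imaginary part being bounded by a branch choice. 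Since the theorem as stated (with the sign $\pm$) is the real-sequence version, this does not affect the validity of your proof of the statement itself.
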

%---------------------- zero number
\subsection{ Distribution of zeros for polynomials }

Fredholm index is an important spectral property of operators.
In this paper, the Fredholm  indices of Toeplitz operators on disk Bergman space with  some polynomial symbols are completely characterized by the distribution of zeros of polynomials.
\begin{definition}
Let $\{a_0 ,a_1 , \cdots, a_n \}$ be a sequence of real numbers. We define $N(a_0 ,a_1 ,\cdots, a_n )$
as the total number of variations of sign in the reduced sequence obtained by ignoring the element $0$.
% For example, $N(1,?1,2,?3) = 3$ and $N(2,?1,?3,0,2,0) = 2$ by the definition.
\end{definition}
The following two theorems refer to Corollary 11.5.14 and  Corollary 11.5.15 in \cite{zero dis}.
\begin{theorem}\label{dis}
Suppose that $n \geq 1$. Let $p(z)= a_n z^n + a_{n-1}z^{n-1} + \cdots + a_0$ be a polynomial of degree $n$.
Define the determinants for $k = 1,2, \ldots, n$ as
\begin{eqnarray}\label{zero inside}
 M_k := \mathrm{det}(B^*_kB_k - A^*_kA_k),
\end{eqnarray}
where
\[A_k =\begin{bmatrix}
  a_0 & a_1 & \cdots & a_{k-1}\\
   &  a_0  & \cdots & a_{k-2}\\
   & &\ddots & \vdots \\
    0 &  & & a_0
\end{bmatrix},
\]
\[B_k =\begin{bmatrix}
  \overline{a}_n & \overline{a}_{n-1} & \cdots & \overline{a}_{n-k+1}\\
   &  \overline{a}_n  & \cdots & \overline{a}_{n-k}\\
   & &\ddots & \vdots \\
    0 &  & & \overline{a}_n
\end{bmatrix}
\]
are two upper triangular matrices, and $A_k^*$ is the conjugate transpose of $A_k$.
If $M_k \neq 0$ for all $1 \le k \le n$, then the number of zeros inside the unit disk ${\mathbb D}$ of $p$ is given by $n - N(1, M_1 , \ldots, M_n).$
\end{theorem}

\begin{theorem}\label{dis2}
Let $p(z)= a_n z^n + a_{n-1}z^{n-1} + \cdots + a_0$  be a polynomial of degree $n~(n \geq 1)$.
Then $p(z)$ has all its zeros inside $D$ if and only if  $M_1, \cdots, M_n$  defined  in \eqref{zero inside} are all positive.
\end{theorem}
For the case $\varphi(z)= \alpha z^2+ \beta z +\gamma$.
\[ A_1 = \gamma, B_1=\overline{ \alpha},~ M_1 = \text{det}(B^*_1B_1 - A^*_1A_1)= |\alpha|^2 -|\gamma|^2. \]
\[A_2 =\begin{bmatrix}
  \gamma & \beta \\
 0  &    \gamma\\
\end{bmatrix},
\]
\[B_2 =\begin{bmatrix}
  \overline{\alpha} & \overline{\beta} \\
 0  &   \overline{ \alpha}\\
\end{bmatrix}.
\]
\[ B^*_2B_2 - A^*_2A_2
  = \begin{bmatrix}
          |\alpha|^2 & \alpha \overline{\beta} \\
          \overline{\alpha} \beta  &  |\beta|^2+|\alpha|^2  \\
              \end{bmatrix}
-    \begin{bmatrix}
       |\gamma|^2 & \beta\overline{\gamma} \\
      \overline{\beta}\gamma  &  |\beta|^2+ |\gamma|^2  \\
   \end{bmatrix}
      = \begin{bmatrix}
         |\alpha|^2 -|\gamma|^2 &\alpha \overline{\beta} -\beta\overline{\gamma} \\
       \overline{ \alpha }\beta - \overline{\beta}\gamma  &   | \alpha|^2-|\gamma|^2\\
       \end{bmatrix}. \]
\[  M_2 = \text{det}(B^*_2B_2 - A^*_2A_2) = (|\alpha|^2 -|\gamma| )^2- |\alpha \overline{\beta} - \beta \overline{\gamma}|^2.\]
Then the number of zeros inside the unit disk ${\mathbb D}$ of $\varphi$ is given by
\begin{eqnarray}\label{num}
2 - N\big(1,  |\alpha|^2 -|\gamma|^2, (|\alpha|^2 -|\gamma|^2 )^2- |\alpha \overline{\beta} - \beta \overline{\gamma}|^2 \big)
\end{eqnarray}
under the condition that $ |\alpha| \neq |\gamma| $ and $   (|\alpha|^2 -|\gamma|^2 )^2 \neq |\alpha \overline{\beta} - \beta\overline{\gamma}|^2$.

\section{Coburn type theorem for  $T_{\overline{z}^m + cz^n}$ }
%------------kernel-------
We let $H^\infty({\mathbb D})$ denote the space of bounded analytic functions on $\mathbb D$.
This section studies the kernel and range of Toeplitz operators $T_{\overline{z}^m + f}$, where $ f \in H^\infty({\mathbb D})$ and $m \ge 1$. The case $m=1$ is considered in \cite{Coburn Lee2023}. In the sequel, we assume $f(z) = \sum_{k=0}^{\infty}a_kz^k\in H^{\infty}({\mathbb D})$, and $g(z) =\sum^{\infty}_{k=0}d_kz^k \in L^2_a({\mathbb D})$. In order to facilitate the computation, we also write $g$ as a decomposition $g=g_0+g_1=g_0+z^mg_2$, where $g_0$ is the partial sum $\sum^{m-1}_{k=0}d_kz^k$.

\begin{lemma}\label{thm ker}
The following equations are equivalent:
%----------u_2(z)=\sum^{\infty}_{k=0}b_{m+k}z^k
\begin{enumerate}
 \item $T_{\overline{z}^m + f}g=0;$
 \item $\big[ z g_1' -(m-1) g_1\big] + (m+1)z^m(fg)+z^{m+1}(fg)'=0;$
   \item $\big( z g_2' + g_2\big) + (m+1)(fg)+z(fg)'=0;$\label{equation}
 \item $d_{m+k}=-(m+1+k)/(k+1)\sum^k_{i=0}a_{k-i}d_i, k=0, 1, \ldots .$
\end{enumerate}
\end{lemma}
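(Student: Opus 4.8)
The plan is to establish the chain of equivalences $(1)\Leftrightarrow(2)\Leftrightarrow(3)\Leftrightarrow(4)$ by a sequence of elementary manipulations, the heart of which is the integral representation of $T^*_{z^m}$ provided by Lemma~\ref{tool1}. First I would treat $(1)\Leftrightarrow(2)$. Write $g = g_0 + g_1$ with $g_0 = \sum_{k=0}^{m-1}d_k z^k$ the low-order part. Since $T_{\overline{z}^m} = T^*_{z^m}$ annihilates $g_0$ by Lemma~\ref{p} (each $P(\overline{z}^m z^k)=0$ for $k<m$), we have $T_{\overline{z}^m}g = T^*_{z^m}g_1$, and $g_1$ satisfies the hypotheses of Lemma~\ref{tool1}. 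Meanwhile $T_f g = fg$ since $f$ is analytic and bounded, so $T_{\overline{z}^m+f}g = 0$ is equivalent to $T^*_{z^m}g_1 = -fg$. Applying the first formula of Lemma~\ref{tool1},
\[
T^*_{z^m}g_1(z) = \frac{1}{z^{m+1}}\int_0^z\big[w g_1'(w) - (m-1)g_1(w)\big]\,dw,
\]
so the equation becomes $\int_0^z[wg_1'(w)-(m-1)g_1(w)]\,dw = -z^{m+1}(fg)(z)$. Differentiating both sides (and noting the converse direction holds because both sides vanish at $0$) gives $z g_1' - (m-1)g_1 = -(m+1)z^m(fg) - z^{m+1}(fg)'$, which is exactly $(2)$.

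Next, $(2)\Leftrightarrow(3)$ is purely algebraic: substitute $g_1 = z^m g_2$, so $g_1' = m z^{m-1}g_2 + z^m g_2'$, hence $z g_1' - (m-1)g_1 = z^m(z g_2' + g_2)$. Dividing $(2)$ through by $z^m$ yields $(z g_2' + g_2) + (m+1)(fg) + z(fg)' = 0$, which is $(3)$; since $z^m \ne 0$ on $\mathbb{D}\setminus\{0\}$ and all functions involved are analytic, this division is reversible. For $(3)\Leftrightarrow(4)$ I would pass to power series. Write $fg = \sum_{k\ge 0}c_k z^k$ with $c_k = \sum_{i=0}^k a_{k-i}d_i$ (Cauchy product), and $g_2 = \sum_{j\ge 0}d_{m+j}z^j$ since $g = g_0 + z^m g_2$ forces the coefficient of $z^j$ in $g_2$ to be $d_{m+j}$. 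The operator $h \mapsto z h' + h$ sends $z^j \mapsto (j+1)z^j$, and $h\mapsto (m+1)h + z h'$ sends $z^k\mapsto (m+1+k)z^k$. Thus the coefficient of $z^k$ in $(3)$ is $(k+1)d_{m+k} + (m+1+k)c_k = 0$, i.e. $d_{m+k} = -\frac{m+1+k}{k+1}\sum_{i=0}^k a_{k-i}d_i$, which is $(4)$. Reading the computation backwards gives the converse.

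The only genuinely delicate point is the equivalence $(1)\Leftrightarrow(2)$, specifically the justification that differentiating the integral identity loses no information: one must check that $z^{m+1}(fg)$ and $\int_0^z[wg_1' - (m-1)g_1]$ are honestly equal as analytic functions on $\mathbb{D}$ (not merely up to a constant), which follows because both sides are in $z^{m+1}\cdot(\text{analytic})$ and in particular vanish to order $\ge m+1$ at the origin — indeed $T^*_{z^m}g_1 \in L^2_a(\mathbb{D})$ and the prefactor $1/z^{m+1}$ is cancelled by the vanishing of the integral at $0$. A secondary technical matter is making sure all the formal power-series operations in $(3)\Leftrightarrow(4)$ are legitimate, i.e. that the identity $(3)$ between analytic functions is equivalent to the equality of all Taylor coefficients; this is immediate from uniqueness of power series expansions, together with the observation that $g_2 \in L^2_a(\mathbb{D})$ and $f \in H^\infty(\mathbb{D})$ guarantee convergence of the relevant series on $\mathbb{D}$. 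Everything else is bookkeeping with the two first-order differential operators $h\mapsto zh'+h$ and $h\mapsto zh' + (m+1)h$.
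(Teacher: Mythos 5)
Your argument is correct and follows essentially the same route as the paper's proof: Lemma~\ref{tool1} for $(1)\Leftrightarrow(2)$, the substitution $g_1=z^mg_2$ for $(2)\Leftrightarrow(3)$, and coefficient extraction for $(3)\Leftrightarrow(4)$ (your direct comparison of Taylor coefficients is the same computation as the paper's taking of $k$-th derivatives at $z=0$). Your explicit attention to the reversibility of each step, in particular fixing the constant of integration by the vanishing at the origin, is a slightly more careful rendering of what the paper leaves implicit.
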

\begin{proof}
%---------------------(1)=(2)
In light of Lemma \ref{tool1}, the equation $T_{\overline{z}^m + f}g=0$ is equivalent to the equation $T^*_{z^m} g_1  + fg =0$. It follows that
$$\frac{1}{z^{m+1}}\int^z_0 \big[ t g_1'(t) -(m-1) g_1(t)\big] dt + fg =0.$$
Differentiating both sides of this equation, we have
\begin{equation}
\big[ z g_1' -(m-1) g_1\big] + (m+1)z^m(fg)+z^{m+1}(fg)'=0.
\end{equation}
 This proves $ (1)\Rightarrow (2)$.
%----------------(2)=(3)
Using the factorization $g_1(z) =z^m g_2(z)$ and part $(2)$, we get
$$\big[ z^{m+1} g_2'+ mz^mg_2 -(m-1)z^m g_2\big] + (m+1)z^m(fg)+z^{m+1}(fg)' =0,$$ which is equivalent to the equation
\begin{equation}\label{equation}
 \big( z g_2' + g_2\big) + (m+1)(fg)+z(fg)'=0.
\end{equation}
Hence we have $(2)\Leftrightarrow(3)$.
%--------------(3)=(4)
Furthermore, taking the $k$-th derivative of Equation \eqref{equation}, we arrive at
\begin{align}\label{equationk}
zg_2^{(k+1)}+(k+1)g^{(k)}_2+(m+1+k)(fg)^{(k)}+z(fg)^{(k+1)}=0.
\end{align}
Setting $z=0$ in Equation \eqref{equationk} and
using the expansion $$\big( f(z)g(z) \big)^{(k)} = \sum^k_{i=0}k! \frac{f^{(k-i)}(z)}{(k-i)!}\frac{g^{(i)}(z)}{i!},\ \ \ k \geq 1,$$ we obtain
 %$b_m=-(m+1)a_0b_0$, $2b_{m+1}=-(m+2)(a_0b_1+a_1b_0)$
\begin{align}\label{cofficient}
d_{m+k}=-\frac{m+1+k}{k+1} \sum^k_{i=0}a_{k-i}d_i, k \geq 0,
\end{align}
which shows $(3)\Rightarrow(4)$. The fact that $(4)\Rightarrow(1)$ is easily checked by direct computation, and this completes the proof of the lemma.
\end{proof}
%--------------------corollary ----
The following is an immediate consequence.
\begin{corollary}\label{cor1}
Suppose $g\in L^2_a(\D)$ is such that $g_0=0$. Then $T_{\overline{z}^m + f}g=0$ if and only if $g\equiv 0.$
\end{corollary}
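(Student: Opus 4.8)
The plan is to deduce the corollary directly from the equivalence $(1)\Leftrightarrow(4)$ in Lemma \ref{thm ker}, exploiting the hypothesis $g_0=0$, which by definition means $d_0=d_1=\cdots=d_{m-1}=0$. First I would observe that since $g_0=0$, all the low-order coefficients $d_0,\dots,d_{m-1}$ vanish, and that the recursion in part $(4)$ of Lemma \ref{thm ker} expresses every coefficient $d_{m+k}$ (for $k\ge 0$) as a linear combination of $d_0,d_1,\dots,d_k$. The point is that the indices appearing on the right-hand side of the recursion, namely $0,1,\dots,k$, are always strictly less than $m+k$; more importantly, one wants to show by induction that they all fall in the already-known-to-be-zero range.

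The key step is the induction itself. For the base cases $k=0,1,\dots,m-1$: here $d_{m+k}=-\frac{m+1+k}{k+1}\sum_{i=0}^{k}a_{k-i}d_i$, and every index $i$ in the sum satisfies $0\le i\le k\le m-1$, so $d_i=0$ by the assumption $g_0=0$; hence $d_{m+k}=0$ for $0\le k\le m-1$, i.e. $d_m=d_{m+1}=\cdots=d_{2m-1}=0$. Combined with the hypothesis this gives $d_j=0$ for all $0\le j\le 2m-1$. Then for general $k\ge m$, proceeding by strong induction on $k$: assuming $d_0=d_1=\cdots=d_{m+k-1}=0$, the right-hand side $\sum_{i=0}^k a_{k-i}d_i$ involves only $d_i$ with $i\le k< m+k$, which vanish by the inductive hypothesis, so $d_{m+k}=0$. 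Therefore all coefficients of $g$ vanish and $g\equiv 0$. Conversely, if $g\equiv 0$ then trivially $T_{\overline{z}^m+f}g=0$.

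I do not anticipate a genuine obstacle here — this is a short bookkeeping argument. The only point requiring a little care is making sure the index ranges line up: that the recursion index $m+k$ stays ahead of all indices $0,\dots,k$ used to compute it (which holds since $m\ge 1$), and that the induction is seeded correctly using both the hypothesis $g_0=0$ and the first batch of recursion steps. One could alternatively phrase the whole thing as a single strong induction: show $d_n=0$ for all $n\ge 0$, splitting into the cases $n<m$ (immediate from $g_0=0$) and $n\ge m$ (write $n=m+k$ with $k=n-m\ge 0$, apply part $(4)$, and invoke the inductive hypothesis on $d_0,\dots,d_k$ since $k<n$). This streamlined version is the one I would actually write down.
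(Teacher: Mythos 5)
Your argument is correct and is essentially the paper's own proof: both use the equivalence $(1)\Leftrightarrow(4)$ of Lemma \ref{thm ker} together with $d_0=\cdots=d_{m-1}=0$ to get $d_m=\cdots=d_{2m-1}=0$ and then conclude by induction. You simply spell out the strong induction that the paper leaves implicit.
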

%-----the proof of cor
\begin{proof}
In view of Lemma \ref{thm ker}, we see that $T_{\overline{z}^m + f}g = 0$ if and only if
\[d_{m+k}=-(m+1+k)/(k+1)\sum^k_{i=0}a_{k-i}d_i, k=0, 1, \ldots.\] Since $d_0= d_1= \cdots = d_{m-1}=0,$
we must have $d_m= d_{m+1} = \cdots = d_{2m-1}=0$. The corollary then follows from induction.
\end{proof}
%------- the kernel of polynomial symbol------
For the special case $f=cz^n$, where $n \geq 0$ and $c \in \mathbb{C}$, we have the following description of the kernel.
\begin{lemma}\label{poly ker}
The following holds for the Toeplitz operator $T_{\overline{z}^m + cz^n}$.
\begin{enumerate}
\item For $|c| \geq 1, \ker \left(T_{\overline{z}^m + cz^n}\right)= \{0\}$.
\item For $|c|<1$, $g\in \ker\left( T_{\overline{z}^m + cz^n}\right)$ if and only if it is of the form
\[g(z) = \sum^{m-1}_{j=0}d_j\bigg( 1+ \sum^{\infty}_{k=1} (-c)^k \bigg( \prod_{i=1}^k \frac{i(m+n)+1+j}{in+(i-1)m+1+j}   \bigg) z^{k(m+n)+j}\bigg). \]
\end{enumerate}
\end{lemma}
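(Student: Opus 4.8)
The plan is to specialize the recursion in part (4) of Lemma~\ref{thm ker} to the symbol $f=cz^n$, which has $a_n=c$ and $a_k=0$ for $k\neq n$. First I would write out what the recursion
\[
d_{m+k}=-\frac{m+1+k}{k+1}\sum_{i=0}^k a_{k-i}d_i
\]
becomes: the only surviving term in the sum is the one with $k-i=n$, i.e. $i=k-n$, so $d_{m+k}=-c\,\frac{m+1+k}{k+1}\,d_{k-n}$ whenever $k\geq n$, and $d_{m+k}=0$ whenever $0\le k<n$. Thus the index shift that matters is $d_{m+k}$ in terms of $d_{k-n}$, i.e. the ``step'' in the index is $m+n$. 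Reindexing $k\mapsto k-n$, this says $d_{(k-n)+m+n}=-c\,\frac{(k-n)+m+1}{k-n+1}\,d_{k-n}$, or more cleanly: for each $j$ with $0\le j\le m-1$, the indices $j, j+(m+n), j+2(m+n),\dots$ form one chain, all other $d$'s (those not in $\{0,\dots,m-1\}$ mod the chain structure, in particular $d_m,\dots,d_{m+n-1}$ and everything they force) vanish by the $0\le k<n$ clause plus induction, exactly as in Corollary~\ref{cor1}.

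Next I would solve each one-dimensional chain. Fix $j\in\{0,\dots,m-1\}$ and set $e_\ell=d_{j+\ell(m+n)}$. The recursion becomes $e_{\ell}=-c\cdot\frac{?}{?}\,e_{\ell-1}$; I need to read off the numerator and denominator correctly. Going from $d_{k-n}$ to $d_{k-n+m+n}$ with $k-n=j+(\ell-1)(m+n)$: the argument of the original recursion is $k=j+(\ell-1)(m+n)+n=j+(\ell-1)m+\ell n$, so $m+1+k=\ell m+\ell n+j+1-m+m = \ell(m+n)+1+j$ — let me just record $m+1+k=\ell(m+n)+1+j$ after simplification (check: $\ell=1$ gives $m+1+k$ with $k=j$, i.e. $m+1+j=1\cdot(m+n)+1+j$ only if $n=0$; so more care is needed here — the correct expression is $m+1+k=m+1+j+(\ell-1)m+\ell n=\ell m+\ell n+j+1-... $). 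I would carefully verify that $m+1+k=i(m+n)+1+j$ and $k+1=in+(i-1)m+1+j$ with $i=\ell$, matching the stated product. Then $e_\ell=(-c)^\ell\Big(\prod_{i=1}^\ell \frac{i(m+n)+1+j}{in+(i-1)m+1+j}\Big)e_0$, and summing over $\ell\ge 0$ with $e_0=d_j$ and then over $j=0,\dots,m-1$ gives exactly the claimed closed form for $g$.

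The remaining point — and the one I expect to be the real obstacle — is the $L^2_a(\mathbb D)$ convergence issue underlying the dichotomy in $|c|$. For part (2), when $|c|<1$, I must check the displayed series actually lies in $L^2_a(\mathbb D)$: the product $\prod_{i=1}^k\frac{i(m+n)+1+j}{in+(i-1)m+1+j}$ is a ratio of two products of linear terms in $i$; both numerator and denominator are asymptotically $\sim i(m+n)$, so the ratio grows only polynomially (a Gamma-function estimate shows it is $O(k^{m/(m+n)})$ or so), hence the coefficient of $z^{k(m+n)+j}$ is $(-c)^k\cdot(\text{polynomial in }k)$, which is square-summable against the Bergman weights precisely because $|c|<1$. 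For part (1), when $|c|\ge 1$, if $g\in\ker T_{\overline z^m+cz^n}$ is nonzero then some $d_j\ne 0$ with $0\le j\le m-1$ (by Corollary~\ref{cor1}), and along that chain $|e_\ell|=|c|^\ell\cdot|e_0|\cdot\big|\prod\big|$; since the product is bounded below away from $0$ (its terms $\frac{i(m+n)+1+j}{in+(i-1)m+1+j}\ge 1$ as $m+n\ge n$ and... actually one must check each factor is $\ge$ some constant, or at least that the product doesn't decay), the coefficients do not tend to $0$, so $g\notin L^2_a$, forcing $g\equiv 0$. I would use Perron's Theorem~\ref{thm1.3} or a direct ratio estimate to make the ``does not decay'' claim rigorous; pinning down the precise comparison constant in the product, and confirming the factors are all positive so no sign subtleties arise, is where the careful bookkeeping lies.
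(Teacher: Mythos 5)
Your proposal follows the paper's proof in essentially the same way: specialize statement (4) of Lemma \ref{thm ker} to $a_n=c$, $a_k=0$ ($k\neq n$), observe that $d_m=\cdots=d_{m+n-1}=0$ and that the recursion links indices in arithmetic chains of step $m+n$ rooted at $d_0,\dots,d_{m-1}$, derive the product formula, and then decide kernel membership by the $L^2_a$-norm of each chain. The bookkeeping you flagged does check out: for $m+k=j+\ell(m+n)$ one has $k=j+(\ell-1)m+\ell n$ (your $\ell=1$ worry came from taking $k=j$ instead of $k=j+n$), so $m+1+k=\ell(m+n)+1+j$ and $k+1=\ell n+(\ell-1)m+1+j$, exactly the factors in the stated product. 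One repair is needed in your $|c|\ge 1$ argument: ``the coefficients do not tend to $0$, so $g\notin L^2_a$'' is not a valid implication in the Bergman space (e.g.\ $\sum_i z^{2^i}$ has non-decaying coefficients but finite norm, since the Bergman weight is $1/(k+1)$). What saves you — and is what the paper does — is that your nonzero coefficients sit on the arithmetic progression $j+\ell(m+n)$ and each factor of the product is $\ge 1$ (numerator exceeds denominator by $m$), so with $|c|\ge1$ one gets $\|g\|^2\ \ge\ |d_j|^2\sum_{\ell\ge1}\frac{1}{\ell(m+n)+j+1}=\infty$, a harmonic-series divergence; no appeal to Perron's theorem is needed. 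Your $|c|<1$ convergence argument (the product grows only polynomially, of order $k^{m/(m+n)}$, so the norm series converges geometrically) is correct and in fact supplies a detail the paper leaves implicit.
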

%-------- the proof of the polynomial  symbol kernel----------------
\begin{proof}
In light of the fourth statement of Lemma \ref{thm ker}, $T_{\overline{z}^m + f}g=0$ holds if and only if $$d_{m+k}=-(m+1+k)/(k+1)\sum^k_{i=0}a_{k-i}d_i, k=0, 1, \ldots. $$
\noindent Since $a_n=c$ and $ a_k= 0,k \neq n,$ we have $d_m= d_{m+1} = \cdots = d_{m+n-1}=0$, and
\[d_{m+n}=-(m+1+n)/(n+1)a_nd_0=(-c)(m+1+n)/(n+1)d_0.\] By induction,
\begin{align*}
d_{k(m+n)+j}&=(-c)\frac{k(m+n)+1+j}{kn+(k-1)m+1+j}d_{(k-1)(m+n)+j},\\
&=(-c)^k \prod_{i=1}^k \frac{i(m+n)+1+j}{in+(i-1)m+1+j},\ \ \ 0\leq j\leq(m-1), k \geq 1.
\end{align*}
Regrouping the terms in the power series expansion for $g$, we can write
\begin{align}
g(z) &= \sum^{m-1}_{j=0}d_j\bigg( 1+ \sum^{\infty}_{k=1} (-c)^k \bigg( \prod_{i=1}^k \frac{i(m+n)+1+j}{in+(i-1)m+1+j}   \bigg) z^{k(m+n)+j}\bigg)\\
&=: \sum^{m-1}_{j=0}d_j (1+ z^j\widehat{g}_j(z)).
\end{align}
Thus, function $g\in  L^2_a({\mathbb D})$ if and only if $ \widehat{g}_j\in L^2_a({\mathbb D})$ for each $j$.
Direct calculation shows that
\begin{align}
\|\widehat{g}_j\|^2 = \sum^{\infty}_{k=1} |c|^{2k} \prod_{i=1}^k \frac{[i(m+n)+1+j]^2}{[in+(i-1)m+1+j]^2}\frac{1}{k(m+n)+1},
\end{align}
which is convergent for  $|c|<1$. When $|c| \geq 1$, we have $$\sum^{\infty}_{k=1} \prod_{i=1}^k \frac{[i(m+n)+1+j]^2}{[in+(i-1)m+1+j]^2}\frac{1}{k(m+n)+1} \geq \sum^{\infty}_{k=1} \frac{1}{k(m+n+1)}=\infty,$$ it implies that $\widehat{g}_j\notin L^2_a({\mathbb D})$. This completes the proof of the lemma.
\end{proof}
%------------------------the dimension of the polynomial symbol kernel -----------
It is well-known that $T_{{\bar z}^m}=(T^*_z)^m$ is of Coburn type with $\dim \ker\left(T_{{\bar z}^m}\right)=m$ and $\dim \ker\left(T_{z^m}\right)=0$. For $c\neq 0$, we have $(T_{\overline{z}^m + cz^n})^*=\bar{c}T_{\overline{z}^n + z^m/\bar{c}}$. The following direct consequence of Lemma \ref{poly ker} is the main result of this section.
\begin{theorem}\label{cor3}
$T_{\overline{z}^m + cz^n}$ is of  Coburn~ type for each $c\in \C$ and $m \geq 1$, $n \geq 0$. Moreover, either
\begin{enumerate}
\item $\dim \ker\left(T_{\overline{z}^m + cz^n} \right)= m$ and $\dim \ker \left(T_{\overline{z}^m + cz^n}^* \right)= 0$, which occurs when $|c|<1$, or
\item $\dim \ker \left(T_{\overline{z}^m + cz^n}\right)= 0$ and $\dim \ker\left( T_{\overline{z}^m + cz^n}^*\right)= n$, which occurs when $|c|\geq 1$.
\end{enumerate}
\end{theorem}

The following lemma characterizes the range of $T_{\overline{z}^m + f}$.
%---------------------range----------
\begin{lemma}\label{thm ran}%-----range---
Suppose $h(z)=\sum^{\infty}_{k=0}c_kz^k \in L^2_a({\mathbb D})$. Then the  following conditions are equivalent:
%----------u_2(z)=\sum^{\infty}_{k=0}b_{m+k}z^k
\begin{enumerate}
 \item $T_{\overline{z}^m + f}g=h;$
 \item $\big[ z g_1' -(m-1) g_1\big] + (m+1)z^m(fg-h)+z^{m+1}(fg-h)'=0;$
   \item $\big( z g_2' + g_2\big) + (m+1)(fg)+z(fg)'= (m+1)h + zh';$
    \item $c_k = (k+1)/(m+k+1)d_{m+k}+\sum^k_{i=0}a_{k-i}d_i, k=0, 1, \ldots.$
\end{enumerate}
\end{lemma}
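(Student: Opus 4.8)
The plan is to mimic closely the proof of Lemma~\ref{thm ker}, since the only change is that the right-hand side is now $h$ instead of $0$; the computations are entirely parallel. First I would invoke Lemma~\ref{tool1}: writing $g=g_0+g_1$ with $g_1=z^mg_2$ and noting $g_1$ satisfies the vanishing hypotheses $g_1(0)=\cdots=g_1^{(m-1)}(0)=0$, the equation $T_{\overline{z}^m+f}g=h$ is equivalent to $T^*_{z^m}g_1+fg=h$, hence to
\[
\frac{1}{z^{m+1}}\int_0^z\big[tg_1'(t)-(m-1)g_1(t)\big]\,dt+fg=h.
\]
Multiplying by $z^{m+1}$ and differentiating once removes the integral and yields statement $(2)$, exactly as in Lemma~\ref{thm ker} but with $fg$ replaced by $fg-h$. (One should remark that the constant of integration is killed automatically because both sides vanish to high enough order at $0$, so no boundary term survives — this is the one point worth checking carefully.)

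Next I would substitute $g_1=z^mg_2$ into $(2)$ and divide through by $z^m$; the terms $mz^mg_2$ and $-(m-1)z^mg_2$ combine to $z^mg_2$, and after cancelling $z^m$ one is left with $(zg_2'+g_2)+(m+1)(fg)+z(fg)'=(m+1)h+zh'$, which is statement $(3)$. This gives $(1)\Leftrightarrow(2)\Leftrightarrow(3)$. For $(3)\Leftrightarrow(4)$, take the $k$-th derivative of $(3)$ at $z=0$. The left-hand side contributes, as in Equation~\eqref{equationk}, the coefficient relation with $d_{m+k}$ and $\sum_{i=0}^k a_{k-i}d_i$; the right-hand side $(m+1)h+zh'$ has $k$-th Taylor coefficient $(m+1)\cdot k!\,c_k + k\cdot k!\,c_k=(m+k+1)\,k!\,c_k$. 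Equating and dividing by the appropriate factorial gives $(m+k+1)^{-1}$ times the clean relation, i.e.
\[
c_k=\frac{k+1}{m+k+1}\,d_{m+k}+\sum_{i=0}^k a_{k-i}d_i,\qquad k\ge0,
\]
which is $(4)$. Conversely, $(4)\Rightarrow(1)$ is checked by reversing these steps, or directly by comparing power series of $T_{\overline{z}^m+f}g$ and $h$ using Lemma~\ref{p}.

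There is no real obstacle here; the proof is a routine adaptation of Lemma~\ref{thm ker}. The only mild subtlety is bookkeeping with the two derivative operations (the differentiation passing from the integral form to $(2)$, and the $k$-fold differentiation passing from $(3)$ to $(4)$) and making sure the arithmetic factor $m+k+1$ on the right-hand side lands correctly — an off-by-one there would produce the wrong coefficient in $(4)$. Accordingly, I would present the proof briefly, pointing to Lemma~\ref{thm ker} for the parts that are identical and spelling out only the inhomogeneous terms $(m+1)h+zh'$ and $(m+k+1)k!\,c_k$.

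\begin{proof}
The argument parallels the proof of Lemma~\ref{thm ker}. By Lemma~\ref{tool1}, with $g=g_0+g_1$ and $g_1=z^mg_2$, the equation $T_{\overline{z}^m+f}g=h$ is equivalent to $T^*_{z^m}g_1+fg=h$, that is,
\[
\frac{1}{z^{m+1}}\int^z_0\big[tg_1'(t)-(m-1)g_1(t)\big]\,dt+fg=h.
\]
Multiplying by $z^{m+1}$ and differentiating gives
\[
\big[zg_1'-(m-1)g_1\big]+(m+1)z^m(fg-h)+z^{m+1}(fg-h)'=0,
\]
so $(1)\Leftrightarrow(2)$. Substituting $g_1=z^mg_2$ into $(2)$ and dividing by $z^m$, the coefficient of $z^mg_2$ becomes $m-(m-1)=1$, and one obtains
\[
\big(zg_2'+g_2\big)+(m+1)(fg)+z(fg)'=(m+1)h+zh',
\]
which is $(3)$; hence $(2)\Leftrightarrow(3)$. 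Differentiating $(3)$ $k$ times and setting $z=0$, the left-hand side yields, exactly as in Equation~\eqref{equationk} and using $(fg)^{(k)}(0)=k!\sum_{i=0}^k a_{k-i}d_i$, the quantity $(k+1)\,k!\,d_{m+k}+(m+1+k)\,k!\sum_{i=0}^k a_{k-i}d_i$; the right-hand side yields $(m+1)\,k!\,c_k+k\cdot k!\,c_k=(m+k+1)\,k!\,c_k$. Dividing by $(m+k+1)\,k!$ gives
\[
c_k=\frac{k+1}{m+k+1}\,d_{m+k}+\sum_{i=0}^k a_{k-i}d_i,\qquad k=0,1,\ldots,
\]
so $(3)\Rightarrow(4)$. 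Finally $(4)\Rightarrow(1)$ follows by comparing the power series of $T_{\overline{z}^m+f}g$ and $h$ with the aid of Lemma~\ref{p}, completing the proof.
\end{proof}
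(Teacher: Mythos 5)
Your proof is correct and follows essentially the same route as the paper's: the paper likewise reduces $(1)\Leftrightarrow(2)\Leftrightarrow(3)$ to the argument of Lemma~\ref{thm ker} and then obtains $(4)$ by differentiating statement $(3)$ $k$ times and evaluating at $z=0$, arriving at exactly your coefficient identity. Your version merely spells out the steps the paper leaves to the reader, with the arithmetic (including the factor $m+k+1$) handled correctly.
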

\begin{proof}%----ran
It is sufficient to show that (3) implies (4), as the rest of the proof is similar to that for Lemma \ref{thm ker}.
In fact, taking the $k$-th $(k \geq 0)$ derivative of statement (3), we have that
\begin{align}\label{equationk}
zg_2^{(k+1)}(z) +(k+1)g^{(k)}_2 (z) &+ (m+1+k)(fg)^{(k)} (z) + z(fg)^{(k+1)} (z)\notag\\
& =(m+k+1)h^{(k)}(z) + zh^{(k+1)}(z).
\end{align}
Setting $z=0$, we arrive at
\begin{align}\label{ran cofficient}
c_{k}=\frac{k+1} {m+1+k}d_{m+k}+\sum^k_{i=0}a_{k-i}d_i, k \geq 0,
\end{align}
which is (4).
\end{proof}
The next corollary is immediate.
%----------------------- range of polynomials symbols
\begin{corollary}\label{poly ran}
For any $n \geq 1, c \in \mathbb{C}$, it holds that $T_{\overline{z}^m + cz^n}g = \sum^{\infty}_{j=0}c_jz^j$, where
 \begin{eqnarray*}
c_j =   \left\{
                          \begin{aligned}
                           \frac{j+1}{m+j+1}d_{m+j}\ \ &  0 \leq j \leq n-1,   \\
                           \frac{j+1}{m+j+1}d_{m+j} + c d_{j-n}  \ \ & j \geq n.
                          \end{aligned}
                           \right.
\end{eqnarray*}
\end{corollary}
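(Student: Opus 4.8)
The plan is to obtain Corollary~\ref{poly ran} as the direct specialization of Lemma~\ref{thm ran} to the symbol $f(z)=cz^n$, so that essentially no new work is needed. First I would note that $\overline{z}^m+cz^n\in L^\infty(\mathbb D,dA)$, hence $T_{\overline{z}^m+cz^n}$ is a bounded operator on $L^2_a(\mathbb D)$ and the image $h:=T_{\overline{z}^m+cz^n}g$ again lies in $L^2_a(\mathbb D)$; write its Taylor expansion as $h(z)=\sum_{k=0}^\infty c_kz^k$ and keep the running notation $g(z)=\sum_{k=0}^\infty d_kz^k$. Applying the equivalence $(1)\Leftrightarrow(4)$ of Lemma~\ref{thm ran}, the coefficients of the image are governed by
\[
c_k=\frac{k+1}{m+k+1}\,d_{m+k}+\sum_{i=0}^{k} a_{k-i}\,d_i,\qquad k\ge 0,
\]
where $a_k$ are the Taylor coefficients of $f$.

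Next I would substitute $f(z)=cz^n$, whose Taylor coefficients are $a_n=c$ and $a_k=0$ for every $k\neq n$. In the convolution sum $\sum_{i=0}^{k} a_{k-i}d_i$ only the index $i$ with $k-i=n$ survives; this index $i=k-n$ belongs to $\{0,1,\dots,k\}$ precisely when $k\ge n$, in which case the sum equals $c\,d_{k-n}$, while for $0\le k\le n-1$ the sum is empty and hence equal to $0$. Relabelling $k$ as $j$ yields the piecewise formula
\[
c_j=\begin{cases}\dfrac{j+1}{m+j+1}\,d_{m+j}, & 0\le j\le n-1,\\ \dfrac{j+1}{m+j+1}\,d_{m+j}+c\,d_{j-n}, & j\ge n,\end{cases}
\]
which is exactly the assertion of the corollary.

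The only point deserving a moment of care — bookkeeping rather than a genuine obstacle — is the truncation of the convolution at the threshold $j=n$: one must verify that $i=k-n$ is a legitimate summation index exactly in the regime $k\ge n$, so that the two branches of the formula agree at the boundary. Beyond Lemma~\ref{thm ran} the statement carries no additional content, so no estimates, convergence arguments, or limiting procedures are required.
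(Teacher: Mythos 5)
Your proposal is correct and matches the paper's intent exactly: the paper presents this corollary as an immediate specialization of Lemma \ref{thm ran}(4) to $f(z)=cz^n$, which is precisely your argument (the only nitpick is that for $0\le k\le n-1$ the convolution sum is not empty but has all terms vanishing, since $a_{k-i}=0$ whenever $k-i\ne n$; your conclusion is unaffected).
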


%------------------------------------------------------------
%          The kernels of $T_{\overline{q} + p}$ and the connectedness of $\sigma (T_{\overline{q} + p})$
%-------------------------------------------
\section{The kernels of $T_{\overline{q} + p}$ and the connectedness of  $\sigma~(T_{\overline{q} + p})$}
By using the methods of difference equations and differential equations, the kernels   and the spectra of  certain Toeplitz operators on $L^2_a({\mathbb D})$ are  characterized.
We first introduce the following theorem, which describes injective  Toeplitz operators with harmonic polynomial symbols of the form $\varphi  = \overline{q}+ p$, where $p$ and $q$ are two analytic polynomials.

For convenience, we write \[q(z) = z^m+\sum_{i=1}^{m-1}\overline{\alpha}_{-(m-i)}z^{m-i},\ \ \ p(z)=\sum_{i=0}^n \alpha_iz^i~(\alpha_n~\neq 0),\] throughout this section.
We say that a polynomial satisfies \emph{Poincar\'{e} condition}  if its zeros have distinct moduli.

%----------------------------------------injective Toeplitz ope
\begin{theorem}\label{injective}
Assume    $\varphi (z) = \overline{q(z)}+ p(z)$.
Suppose that the associated polynomial 
\[\varphi_0 (z):=1+\sum_{i=1}^{m+n}\alpha_{i-m}z^{i} \] 
satisfies Poincar$\acute{e}$ condition and exists at least one zero  inside $\mathbb {D}$.
Then  $ \mathrm{ker}(T_{\varphi}) = \{0\}.$
\end{theorem}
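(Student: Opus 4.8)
The plan is to convert the equation $T_\varphi g = 0$ into an explicit linear recurrence for the Taylor coefficients of $g$ and then extract growth information from Poincar\'e's and Perron's theorems. Write $g(z)=\sum_{t\ge 0}d_tz^t\in L_a^2(\mathbb D)$ and note that $\overline{q(z)}=\overline{z}^{\,m}+\sum_{k=1}^{m-1}\alpha_{-k}\overline{z}^{\,k}$. Applying Lemma~\ref{p} termwise to $P\big(\overline{q}\,g\big)+pg$, the equation $T_\varphi g=0$ becomes, for every $l\ge 0$,
\[
\frac{l+1}{l+m+1}\,d_{l+m}+\sum_{k=1}^{m-1}\alpha_{-k}\,\frac{l+1}{l+k+1}\,d_{l+k}+\sum_{i=0}^{\min(l,n)}\alpha_i\,d_{l-i}=0 .
\]
For $l\ge n$ this is a difference equation of order $m+n$ whose leading coefficient $\tfrac{l+1}{l+m+1}$ and trailing coefficient $\alpha_n$ never vanish. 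After dividing by $\tfrac{l+1}{l+m+1}$ and letting $l\to\infty$ all coefficient functions converge, so the recurrence is of Poincar\'e type, and its limiting characteristic polynomial is the associated polynomial with the order of its coefficients reversed; hence the characteristic roots are the reciprocals $1/\zeta_1,\dots,1/\zeta_{m+n}$ of the zeros $\zeta_j$ of the associated polynomial, and the Poincar\'e hypothesis makes these have pairwise distinct moduli, so Theorem~\ref{thm1.1} applies.

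Next I argue by contradiction: suppose $0\ne g\in\ker T_\varphi$. A degree comparison of the two sides of $P(\overline{q}\,g)=-pg$ — the top term on the right has degree $\deg g+n$ with nonzero coefficient, while $P(\overline{q}\,g)$ has strictly smaller degree — shows $g$ is not a polynomial, so $(d_t)$ is a nontrivial solution of the Poincar\'e recurrence. Theorem~\ref{thm1.1} then gives $d_{t+1}/d_t\to 1/\zeta_j$ for some $j$, and Theorem~\ref{thm1.3} gives $|d_t|=|\zeta_j|^{-t}e^{o(t)}$ as $t\to\infty$. Since $g$ is holomorphic on $\mathbb D$, its Taylor series has radius of convergence at least $1$, which forces $|\zeta_j|\ge 1$; equivalently, the coefficients of any nonzero kernel vector must be governed by a characteristic mode $1/\zeta_j$ attached to a zero $\zeta_j$ of the associated polynomial with $|\zeta_j|\ge 1$.

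The core of the proof, and the step I expect to be the main obstacle, is to rule this out using the hypothesis on the location of the zeros of the associated polynomial. The subtlety is that candidate kernel vectors are not arbitrary solutions of the order-$(m+n)$ recurrence: the low-index equations ($l=0,\dots,n-1$) cut the solution space down to the $m$-dimensional family parametrized by $\big(g(0),g'(0),\dots,g^{(m-1)}(0)\big)$. One must show that this $m$-dimensional family meets the span of the ``subcritical'' modes — those attached to zeros $\zeta_j$ with $|\zeta_j|\ge 1$ — only in $\{0\}$; combined with Perron's basis of solutions $u_1,\dots,u_{m+n}$ (where $u_i(t+1)/u_i(t)\to 1/\zeta_i$), the distinct-moduli hypothesis, and the hypothesis on the zeros, this would force the relevant $\zeta_j$ to satisfy $|\zeta_j|<1$, whence $|d_t|\to\infty$, contradicting $g\in L_a^2(\mathbb D)$. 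A parallel and probably cleaner route is to reduce $T_\varphi g=0$ to a first-order linear ODE in the style of Lemma~\ref{tool1}, whose only finite singular points are $0$ and the zeros of the associated polynomial, and to exclude nonzero kernel vectors by exhibiting a non-removable singularity of the solution at a zero lying inside $\mathbb D$; quantifying the order of that singularity against integrability in $L_a^2(\mathbb D)$ is the same difficulty in analytic form.
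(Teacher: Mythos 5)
Your setup coincides with the paper's: both translate $T_\varphi g=0$ via Lemma \ref{p} into the coefficient recurrence (the paper's \eqref{pq}), observe that it is a Poincar\'{e}-type difference equation of order $m+n$, and identify the limiting characteristic roots with the reciprocals $1/\zeta_1,\dots,1/\zeta_{m+n}$ of the zeros of the associated polynomial (your bookkeeping here is actually cleaner than the paper's own Equation \eqref{2.2}). The genuine gap is that your proposal stops exactly at the step that carries all the content of the theorem. The hypothesis only guarantees \emph{one} zero of the associated polynomial inside $\D$, i.e.\ one characteristic root of modulus greater than $1$; the other $m+n-1$ roots may well have modulus less than $1$, and solutions built from those modes decay geometrically and do define functions in $L^2_a(\D)$. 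Since the kernel-candidate solutions (those consistent with $d_k=0$ for $k<0$) form an $m$-dimensional family inside the $(m+n)$-dimensional solution space, while the decaying modes can span an $(m+n-1)$-dimensional family, a dimension count cannot by itself exclude a nonzero intersection when $m\ge 2$. You correctly say that ``one must show'' this intersection is trivial, and you concede that your alternative ODE route faces ``the same difficulty''; neither argument is carried out, so the proof is not complete.

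The paper closes precisely this gap with a symmetry argument that your writeup does not contain: after disposing of the trivial case $d_0=\cdots=d_{m-1}=0$ and normalizing the initial data, the recurrence coefficients are (up to sign and rational factors tending to $1$) the elementary symmetric functions of the characteristic roots, so each $d_{km}$ is a symmetric polynomial of degree $k$ in $\lambda_1,\dots,\lambda_{m+n}$; the paper then argues that the ratio limit supplied by Poincar\'{e}'s Theorem \ref{thm1.1} must be the root of maximal modulus $\lambda_1$, and $|\lambda_1|>1$ because the associated polynomial has a zero in $\D$, contradicting $g\in L^2_a(\D)$. Whether or not one finds that step fully rigorous as written, it is the decisive ingredient, and it is exactly what is missing from your proposal. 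Your remaining observations --- that a nonzero kernel element cannot be a polynomial, and that Theorem \ref{thm1.3} turns the ratio limit into $|d_t|=|\zeta_j|^{-t}e^{o(t)}$ and hence $|\zeta_j|\ge 1$ --- are correct but only restate the problem; they do not rule out the subcritical modes, which is the theorem's actual assertion.
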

%---------  proof
\begin{proof}
For any $g(z)=\sum^\infty_{k= 0}d_k z^k\in L^2_a(\mathbb{D})$, with application of  Lemma \ref{p} we have
\[ T_{\overline{z}^i}g(z)=\sum^\infty_{k= 0} \frac{k+1}{i+k+1} d_{i+k} z^k~(i \geq 1),\]
and hence
\[ T_{\overline{q}} g(z)=\sum^\infty_{k= 0} \left( \frac{k+1}{m+k+1} d_{m+k}+\sum_{i=1}^{m-1}\alpha_{i-m} \frac{k+1}{m-i+k+1}d_{m-i+k} \right)z^k .\]
Therefore, $T_\varphi g=0$ if and only if
\begin{equation}\label{pq}
\frac{k+1}{m+k+1} d_{m+k}+\sum_{i=1}^{m-1}\alpha_{i-m} \frac{k+1}{m-i+k+1}d_{m-i+k}+\sum_{i=0}^n \alpha_i d_{k-i}=0,\quad k\geq 0,
\end{equation}
assuming $d_j=0$ if $j<0$. The corresponding characteristic equation is
\begin{equation}\label{2.2}
 0=z^{m+n}+\sum_{i=1}^{m+n} \alpha_{i-m} z^{m+n-i}=z^{m+n}\varphi_0(1/z).
\end{equation}
Since $\varphi_0$ is assumed to be Poincar\'{e}, the polynomial $z^{m+n}\varphi_0(\frac{1}{z})$ also has zeros of distinct moduli, say $\lambda_1,\lambda_2,\cdots,\lambda_{m+n}$ with the order $|\lambda_1|> \cdots >|\lambda_{n+m}|$. Then, we must have $|\lambda_1|> 1$. Furthermore, we have
\[ (-1)^{i}\alpha_{i-m}=s_{i}(\lambda_1,\lambda_2,\cdots, \lambda_{m+n}),\ \ \ i=1,\cdots,m+n, \]
where $s_i$ denote the $i$-th fundamental symmetric polynomial of $\lambda_1,\lambda_2,\cdots,\lambda_{m+n}$.

If $d_0=d_1=\cdots=d_{m-1}=0$, we have $g(z)=0$ using  recursion relation \eqref{pq}. The proof is  completed.
If $(d_0, d_1,\cdots, d_{m-1})$ is nonzero, by recursion relation \eqref{pq}, we conclude that $d_{m+k}~( k \ge 0)$ is a symmetric polynomial of $\lambda_1, \lambda_2, \cdots, \lambda_{m+n}$ of degree at most $m+k$.
Thus, if the characteristic equation \eqref{2.2} satisfies Poincar\'{e} condition, then Poincar\'{e} Theorem  \ref{thm1.1} implies that $\lim_{k\to \infty}d_{k+1}/d_{k}=\lambda_j$  for some $j$. Since $d_{k}$  is a symmetric polynomial of $\lambda_1, \lambda_2, \cdots, \lambda_{m+n}$ of degree $k$, then by Perron's Theorem \ref{thm1.3}, $\lambda_i$ must be the eigenvalue of the maximum modulus, i.e., $\lambda_1$.  But this implies that $g(z) \notin Hol({\mathbb D})$ as $|\lambda_1|>1$, this is a contradiction. And so we must have $\ker\left( T_\varphi\right) =\{0\}$.
This completes the proof.
\end{proof}

%----------

When $T_\varphi$ with $\varphi (z)= \overline{z}^m + \alpha z^m + \beta$, we fully characterize the  kernel of Toeplitz operator $T_\varphi$.

{\bf Case 1}~$\alpha =0$.

See Lemma \ref{poly ker}(2) with $\beta =c$, which is equivalent to that  the zeros of $1 + \beta z^m$ being outside $\D$. Then $\dim \ker \left(T_\varphi\right) = m$.

{\bf Case 2}~$\alpha \neq 0$.

{\bf Subcase 1}~$\beta^2 \neq 4\alpha^2.$

In this case, the  zeros $z^m_0$ and $z^m_1$ of $\alpha z^{2m}+ \beta z^m + 1$ are not equal.

For $g(z):= \eta(z)+\eta_0(z)=\sum^\infty_{k=0}d_k z^k \in L^2_a({\mathbb D}) $ such that
\[ T_{\varphi}g (z)= T_{\overline{z}^m}\eta(z) +  T_{ \alpha z^m + \beta}[\eta(z)+\eta_0(z)] =0,\]
where $\eta_0$ is the polynomial which degree  is $(m-1)$ at most and $\eta(z) \in L^2_a({\mathbb D}) $  satisfying that $0$ is the zero with  $m$ multiple at least.
Using Lemma \ref{tool1}, we have the following equation
\begin{align*}\label{eq}
& \big[ m(2\alpha z^{2m}+ \beta z^m -1)+ (\alpha z^{2m}+ \beta z^m + 1) \big] \eta(z) + z \big( \alpha z^{2m}+ \beta z^m + 1  \big) \eta'(z)\notag \\
&\ \ \ \  \ \ \ \  \ \ \ \  \ + \big[ m \alpha z^{2m} + (m+1)(\alpha z^{2m} + \beta z^m )\big] \eta_0(z) + z( \alpha z^{2m} +\beta z^m ) \eta'_0(z)=0.
\end{align*}
With the notation of $\delta (z):=(m-1)\eta_0(z)-z\eta'_0(z)= d_{m-2}z^{m-2}+ 2d_{m-3}z^{m-3}+\cdots+(m-2)d_1z+(m-1)d_0$~($m=1, \delta (z)\equiv 0$), which is a polynomial with degree is  $m-2$ at most, where $\eta_0(z)= d_0+ \cdots + d_{m-1}z^{m-1}$,
the above equation can be rewritten as
\begin{align}
 \big[  m(2\alpha z^{2m}+ \beta z^m -1)&+ (\alpha z^{2m}+ \beta z^m + 1) \big] g(z)\notag\\
  &+ z \big( \alpha z^{2m}+ \beta z^m + 1  \big) g'(z) + \delta (z)=0.
\end{align}
Set $G(z) :=  z(\alpha z^{2m}+ \beta z^m + 1) g(z).$
Since $g(z) \in L^2_a({\mathbb D})$, we have  $G(z) \in L^2_a({\mathbb D})$.
Then the above equation is
\begin{eqnarray}\label{inte}
G'(z)-\frac{m}{z(\alpha z^{2m}+ \beta z^m + 1)} G(z)+\delta (z)=0.
\end{eqnarray}
With the assumption that $\alpha z^{2m}+ \beta z^m + 1= \alpha (z^m-z^m_0)(z^m-z^m_1)$ and $|z_0| > 1$ and  $|z_1| > 1$. Set
\begin{eqnarray}\label{G0}
G_0 (z):= z^m    \frac{(z^m-z^m_0)^{\frac{z^m_1}{z_0^m-z^m_1}}}{(z^m-z^m_1)^{\frac{z^m_0}{z_0^m-z^m_1}}},
          \end{eqnarray}
where $(z^m-z^m_0)^{\frac{z^m_1}{z_0^m-z^m_1}}= e^{\frac{z^m_1}{z_0^m-z^m_1} \mathrm{ln}(z^m-z^m_0)}$.

By some direct calculations, we have
\begin{align*}
\frac{G'_0 (z)}{G_0 (z)}&=\frac{m}{z}+\frac{z^m_1}{z_0^m-z^m_1}\frac{mz^{m-1}}{z^m-z^m_0}-\frac{z^m_0}{z_0^m-z^m_1}\frac{mz^{m-1}}{z^m-z^m_1}\\
                       &=\frac{m}{z}+ mz^{m-1}\frac{-(z_0^m-z^m_1)z^m+(z_0^{2m}-z^{2m}_1)}{(z_0^m-z^m_1)(z^m-z^m_0)(z^m-z^m_1)}\\
                         &=\frac{m}{z}+ mz^{m-1}\frac{-\alpha z^m+\alpha(z_0^m+z^m_1)}{\alpha (z^m-z^m_0)(z^m-z^m_1)}\\
                           &=\frac{m}{z}- mz^{m-1}\frac{\alpha z^m+\beta}{\alpha (z^m-z^m_0)(z^m-z^m_1)}~(\text{using} ~z_0^m+z^m_1 =-\frac{\beta}{\alpha} )\\
                           &=\frac{m}{z}- m\frac{\alpha z^{2m-1}+\beta z^{m-1}}{\alpha (z^m-z^m_0)(z^m-z^m_1)}\\
                           &=\frac{m}{z(\alpha z^{2m}+ \beta z^m + 1)}.~\left(\text{using} ~\alpha z^{2m}+ \beta z^m + 1= \alpha (z^m-z^m_0)(z^m-z^m_1) \right)
\end{align*}
Consider the following function
\begin{eqnarray}
G(z) :=  G_0(z) \left(  -  \int^z_{0} \frac{\delta (t)}{G_0(t)}dt +c  \right),  \label{Gz}
\end{eqnarray}
where $z \in \mathbb {D}$ and $c$ is  a constant.
Since $\lim\limits_{z \rightarrow 0} z^m \int^z_0 t^{-m} dt  = 0, $ we conclude that $G(z)$ is a bounded function.
Since  $G'(z) =  G'_0(z) \left(  -  \int^z_{0} \frac{\delta (t)}{G_0(t)}dt +c  \right) -\delta (z),$ we have
\begin{align*}
G'(z)&-\frac{m}{z(\alpha z^{2m}+ \beta z^m + 1)} G(z)+\delta (z)\\
   &= G'_0(z) \left(  -  \int^z_{0} \frac{\delta (t)}{G_0(t)}dt +c  \right) -\delta (z) -\frac{m}{z(\alpha z^{2m}+ \beta z^m + 1)} G(z)+\delta (z)\\
                                                               &=G'_0(z) \left(  -  \int^z_{0} \frac{\delta (t)}{G_0(t)}dt +c  \right) -\delta (z)-  \frac{G'_0 (z)}{G_0 (z)} G(z)+\delta (z)\\
                                                                &=G'_0(z) \left(  -  \int^z_{0} \frac{\delta (t)}{G_0(t)}dt +c  \right)-G'_0(z) \left(  -  \int^z_{0} \frac{\delta (t)}{G_0(t)}dt +c  \right)\\
                                                                &=0.
\end{align*}
So $G(z)$ is the solution  of Equation  \eqref{inte}.
Hence
\begin{align}\label{root}
g(z)&= \frac{G(z)}{z(\alpha z^{2m}+ \beta z^m + 1)} = \frac{G_0(z)}{z\alpha (z^m-z^m_0)(z^m-z^m_1)}\left(  -  \int^z_{0} \frac{\delta (t)}{G_0(t)}dt +c  \right)\notag\\
    &= z^{m-1}\frac{(z^m-z^m_0)^{\frac{2z^m_1}{z_0^m-z^m_1}}}{\alpha(z^m-z^m_1)^{\frac{2z^m_0}{z_0^m-z^m_1}}}\left(  -  \int^z_{0} \frac{\delta (t)}{G_0(t)}dt +c  \right).
\end{align}
If $m=1$, then $\delta (z)=0$; if $m>1$, then $\lim\limits_{z \rightarrow 0} z^{m-1} \int^z_0 t^{-m} dt $ exist.
So $g(z)$ is a bounded analytic function.
As is known to all, the solution space of  a first-order linear differential equation is at most 1-dimensional and by the definition of $\delta (z)= d_{m-2}z^{m-2}+ 2d_{m-3}z^{m-3}+\cdots+(m-2)d_1z+(m-1)d_0$ and the fact that $\delta (z)\equiv 0$ perhaps  occur, we have $\mathrm{dim~ker}(T_\varphi ) = m$.

By Vieta's theorem,  $\frac{1}{z^m_0}$ and $\frac{1}{z^m_1}$ are distinct zeros of  the equation $z^{2m}+ \beta z^m +\alpha=0$ inside $\D$.
Applying Theorem \ref{dis2},  $\alpha z^{2m}+ \beta z^m +\alpha=0 $ has all its zeros  in $\mathbb {D}$
if and only if $1-|\alpha|^2 >0$ and $1-|\alpha|^2 > |\alpha \overline{\beta} - \beta|$,
 that  is  $1-|\alpha|^2 >|\alpha \overline{\beta} - \beta|$.
This leads to the following result.
\begin{theorem}
Suppose that $\varphi (z)= \overline{z}^m + \alpha z^m + \beta$, $ \alpha z^{2m} + \beta z^m + 1= \alpha(z^m-z^m_0)(z^m-z^m_1)$ ~with  $1 > 1-|\alpha|^2>|\alpha \overline{\beta} - \beta|$ and $ \beta^2 \neq 4 \alpha$.
Then \[ \ker\left(T_{\varphi}\right)= \mathrm{Span}\{ g_1, \cdots, g_m\},\]
~where
$g_1 (z)= \frac{G_0 (z)}{\alpha~ z(z^m-z^m_0)(z^m-z^m_1)}$,~$g_j (z)= - \frac{G_0 (z)}{\alpha~ z(z^m-z^m_0)(z^m-z^m_1)}\int^z_0\frac{t^{j-2}}{G_0 (t)}dt~(j \ge 2)$ are linearly independent, the definition of   $G_0(z)$ is given in \eqref{G0}.
\end{theorem}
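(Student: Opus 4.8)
The plan is to leverage the explicit computation that has already been carried out in the body text preceding the theorem statement, and simply verify that under the stated hypotheses on $\alpha$ and $\beta$ everything is well-defined and the claimed spanning set is correct and linearly independent. First I would observe that the hypothesis $|\alpha|<1$ together with $1-|\alpha|^2>|\alpha\overline{\beta}-\beta|$ is, by Equation \eqref{num} and the sign-variation count, exactly the condition that $\alpha z^{2m}+\beta z^m+1$ has no zeros inside $\mathbb{D}$; hence in the factorization $\alpha z^{2m}+\beta z^m+1=\alpha(z^m-z_0^m)(z^m-z_1^m)$ one has $|z_0|>1$ and $|z_1|>1$, which is precisely the standing assumption under which $G_0(z)$ in \eqref{G0} and the functions $G(z)$ in \eqref{Gz} were constructed and shown to be bounded analytic on $\mathbb{D}$.

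Next I would recall the chain of equivalences established in the text: writing $g=\eta+\eta_0$ with $\eta_0$ the Taylor polynomial of degree $\le m-1$ and $\eta$ vanishing to order $\ge m$ at $0$, the equation $T_\varphi g=0$ is equivalent, via Lemma \ref{tool1}, to the first-order linear ODE \eqref{inte} for $G(z)=z(\alpha z^{2m}+\beta z^m+1)g(z)$ with forcing term $\delta(z)=(m-1)\eta_0(z)-z\eta_0'(z)$, a polynomial of degree $\le m-2$ whose coefficients are $(m-1)d_0,(m-2)d_1,\dots,d_{m-2}$. The general solution of \eqref{inte} is \eqref{Gz}; dividing by $z(\alpha z^{2m}+\beta z^m+1)$ recovers $g$, and the text already verifies that the resulting $g$ is bounded analytic (using $\lim_{z\to 0}z^{m-1}\int_0^z t^{-m}\,dt$ exists). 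So the solution space of $T_\varphi g=0$ inside $L^2_a(\mathbb{D})$ is parametrized by the free constant $c$ together with the $m-1$ free coefficients $d_0,\dots,d_{m-2}$ appearing in $\delta$, giving $\dim\ker T_\varphi=m$. Then I would exhibit the basis: taking $\delta\equiv 0$ and $c=1$ gives $g_1(z)=\frac{G_0(z)}{\alpha z(z^m-z_0^m)(z^m-z_1^m)}$; taking $c=0$ and $\delta(z)=t^{j-2}$ (i.e. isolating one coefficient) gives $g_j(z)=-\frac{G_0(z)}{\alpha z(z^m-z_0^m)(z^m-z_1^m)}\int_0^z \frac{t^{j-2}}{G_0(t)}\,dt$ for $2\le j\le m$.

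Finally I would check linear independence. The cleanest route is to note that each $g_j$ has the common factor $\frac{G_0(z)}{\alpha z(z^m-z_0^m)(z^m-z_1^m)}$, so $\sum_{j=1}^m c_j g_j=0$ forces $c_1-\int_0^z\bigl(\sum_{j=2}^m c_j t^{j-2}\bigr)\frac{dt}{G_0(t)}=0$ identically; differentiating, $\bigl(\sum_{j=2}^m c_j z^{j-2}\bigr)/G_0(z)\equiv 0$, and since $G_0$ is not identically zero the polynomial $\sum_{j=2}^m c_j z^{j-2}$ vanishes identically, so $c_2=\cdots=c_m=0$ and then $c_1=0$. Alternatively one can look at the leading behavior as $z\to 0$: $g_1(z)\sim (\text{nonzero})\,z^{m-1}$ while $g_j(z)\sim(\text{nonzero})\,z^{j-2}\cdot z^{m-1}/z^{\,?}$ have mutually distinct orders of vanishing, which separates them. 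I expect the main obstacle to be purely bookkeeping: carefully matching the $m-1$ coefficients of $\delta$ to the indices $j=2,\dots,m$ and confirming that each $g_j$ genuinely lies in $L^2_a(\mathbb{D})$ (boundedness near the boundary is clear since $|z_0|,|z_1|>1$ keep the denominator away from zero on $\overline{\mathbb{D}}$, and the only delicate point is the integrable singularity of $1/G_0$ at the origin, already handled in the text). No genuinely new estimate is needed; the theorem is essentially a repackaging of the displayed computation into a basis statement.
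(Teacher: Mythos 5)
Your proposal is correct and follows essentially the same route as the paper: the theorem is proved there precisely by the computation you invoke --- the reduction via Lemma \ref{tool1} to the ODE \eqref{inte}, the integrating factor $G_0$ from \eqref{G0}, the general solution \eqref{Gz}, and the sign-variation criterion \eqref{num} guaranteeing $|z_0|,|z_1|>1$. Your explicit matching of the parameters $(c,\delta)$ to the basis $g_1,\dots,g_m$ and the linear-independence check merely spell out what the paper leaves implicit, so there is no essential difference in approach.
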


{\bf Subcase 2}~$\beta^2 = 4\alpha.$

Set
\begin{eqnarray}\label{expre}
 \alpha z^{2m}+ \beta z^m + 1 = \alpha (z^m -z_0^m)^2~ \text{with}~z_0^m= -\frac{\beta}{2 \alpha}
\end{eqnarray}
and
\begin{eqnarray}\label{root2}
G_1(z)=\frac{ z^{m-1}}{\big(\alpha z^{2m}+ \beta z^m + 1\big)^{\frac{3}{2}}} e^{\frac{\beta}{2\alpha } \frac{1}{z^m -z_0^m }}.
\end{eqnarray}
Replacing
\begin{eqnarray}\label{root3}
g(z)=G_1(z) \left(  -  \int^z_{0} \frac{\delta (t)}{t\big(\alpha t^{2m}+ \beta t^m + 1\big)G_1(t)}dt +c  \right)
\end{eqnarray}
with $g(z)$ in Equation \eqref{root}, and similar  to the proof of the above theorem, we obtain the following theorem.
\begin{theorem}
Suppose that $\varphi (z)= \overline{z}^m + \alpha z^m + \beta$~with $ \beta^2 = 4 \alpha$ and $0 < |\beta| < 2.$
Then \[ \ker\left(T_{\varphi}\right)= \mathrm{Span}\{ h_1, \cdots, h_m\},\]
~where
$h_1 (z)= G_1(z)$,~$h_j (z)= -G_1(z) \int^z_0\frac{t^{j-2}}{t\big(\alpha t^{2m}+ \beta t^m + 1\big)G_1 (t)}dt~(j \ge 2)$ are linearly independent, the definition of   $G_1(z)$ is given in \eqref{root2}.
\end{theorem}
The proofs of the above two theorems and Lemma \ref{thm ker} stimulate us to raise the following problem.
\begin{problem}
Is $\mathrm{dim~ker}\left(T_{\overline{z}^m +f}\right) = m$ for a bounded function $f$ satisfying that all the zeros of $ 1 +zf$  are outside $\mathbb D$.
\end{problem}

Let $\varphi (z) = \overline{q(z)}+ p(z)$ as in Theorem \ref{injective}.
 For any $\lb\in \C$, we define an associated polynomial \[ \varphi_\lb(z):=\left(1+\sum_{i=1}^{m+n}\alpha_{i-m}z^{i}\right)-\lb z^m.\] Observe that $\varphi_\lb(0)=1, $ and on $ \mathbb {T}$ we have $\varphi(z)-\lb=\frac{\varphi_\lb(z)}{z^m}$.
The following theorem is our main result for this section.
\begin{theorem}\label{main3}
If $\varphi_\lb$ is Poincar\'{e} for every $\lb \notin \varphi(\mathbb T)$, then
\begin{eqnarray*}
 \sigma (T_{\varphi})&=&\varphi (\mathbb T ) \bigcup \big\{ \lambda \in \mathbb {C}: \lambda \notin   \sigma_e(T_{\varphi}), \ind (T_{\varphi} - \lambda I ) \neq 0 \big  \}\\
                      &=& \varphi (\mathbb T ) \bigcup \big\{ \lambda \in \mathbb {C}: \lambda \notin   \varphi (\mathbb T), \wind( \varphi (\mathbb T), \lambda ) \neq 0 \big  \}.
\end{eqnarray*}
\end{theorem}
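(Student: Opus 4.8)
### Proof proposal for Theorem~\ref{main3}

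The plan is to combine the general spectral decomposition of Theorem~\ref{specT} with the kernel analysis of Theorem~\ref{injective}, applied not to $T_\varphi$ itself but to all of its translates $T_\varphi - \lambda I = T_{\varphi - \lambda}$. The first equality in the statement is essentially a restatement of Theorem~\ref{specT} once we show that the third set appearing there—the isolated point eigenvalues $\lambda \notin \sigma_e(T_\varphi)$ with $\ind(T_\varphi - \lambda I) = 0$—is empty. So the crux is: if $\lambda \notin \varphi(\mathbb{T}) = \sigma_e(T_\varphi)$ and $\ind(T_{\varphi-\lambda}) = 0$, then $\lambda$ is not an eigenvalue of $T_\varphi$, i.e. $\ker T_{\varphi - \lambda} = \{0\}$. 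The second equality then follows from Lemma~\ref{spec}, which identifies $\sigma_e(T_\varphi) = \varphi(\mathbb{T})$ and $\ind(T_\varphi - \lambda I) = -\wind(\varphi(\mathbb{T}), \lambda)$.

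First I would fix $\lambda \notin \varphi(\mathbb{T})$ and observe that $T_{\varphi - \lambda} g = 0$ is governed by exactly the same recursion as in the proof of Theorem~\ref{injective}, but with $\varphi$ replaced by $\varphi - \lambda$; concretely, the characteristic polynomial becomes $z^{m+n}\varphi_\lambda(z)$ where $\varphi_\lambda(z) = \left(1 + \sum_{i=1}^{m+n}\alpha_{i-m}z^i\right) - \lambda z^m$, which is the associated polynomial defined just before the theorem. By hypothesis $\varphi_\lambda$ is Poincar\'{e}, so $z^{m+n}\varphi_\lambda(z)$ has zeros of pairwise distinct moduli, and the difference equation \eqref{pq} (with the $\lambda z^m$ term absorbed) is of Poincar\'{e} type with these zeros as characteristic roots. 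Next I would invoke the relation $\ind(T_{\varphi - \lambda}) = -\wind(\varphi(\mathbb{T}), \lambda)$ together with the identity $\varphi(z) - \lambda = \varphi_\lambda(z)/z^m$ on $\mathbb{T}$: this shows that the winding number of $\varphi(\mathbb{T})$ about $\lambda$ equals $Z(\varphi_\lambda) - m$, where $Z(\varphi_\lambda)$ is the number of zeros of $\varphi_\lambda$ inside $\mathbb{D}$ (by the argument principle applied to $\varphi_\lambda$, using $\varphi_\lambda(0) = 1 \neq 0$). Hence $\ind(T_{\varphi-\lambda}) = 0$ is equivalent to $\varphi_\lambda$ having exactly $m$ zeros in $\mathbb{D}$, and therefore $m+n - m = n$ zeros outside—in particular at least one zero strictly inside $\mathbb{D}$ whenever $m \geq 1$, so the largest-modulus root $\lambda_1$ of $z^{m+n}\varphi_\lambda$ satisfies $|\lambda_1| > 1$ (this is the point where one must be slightly careful: one wants a root of modulus $> 1$, which follows because not all $m+n$ roots can lie in $\overline{\mathbb{D}}$ once $\varphi_\lambda$ has $n \geq 1$ roots outside; the case $n = 0$ is handled by the explicit description already given for $\varphi = \overline{z}^m + \alpha z^m + \beta$, or by a direct argument).

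With $|\lambda_1| > 1$ in hand, the kernel argument of Theorem~\ref{injective} applies verbatim: if $(d_0, \dots, d_{m-1})$ were a nonzero initial vector for the recursion, normalizing $d_0 = 1$, the subsequence $d_{km}$ is a symmetric polynomial of degree $k$ in the characteristic roots, forcing $\lim_{k\to\infty} d_{(k+1)m}/d_{km} = \lambda_1$ by Poincar\'{e}'s Theorem~\ref{thm1.1}, hence $|d_{km}|$ grows geometrically and $g \notin L^2_a(\mathbb{D})$; and if $(d_0,\dots,d_{m-1}) = 0$ the recursion gives $g \equiv 0$. Either way $\ker T_{\varphi-\lambda} = \{0\}$, so $\lambda \notin \sigma_p(T_\varphi)$. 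Feeding this into Theorem~\ref{specT} kills the third set there, and also shows the isolated eigenvalues in the unbounded component of $\mathbb{C}\setminus\sigma_e(T_\varphi)$ do not occur, yielding the first displayed equality; translating $\ind$ via Lemma~\ref{spec} gives the second.

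The main obstacle I anticipate is the bookkeeping around the winding-number/zero-count identity and, relatedly, making rigorous the claim that $\ind(T_{\varphi-\lambda}) = 0$ forces a characteristic root of modulus $> 1$. The Poincar\'{e} machinery itself is plug-and-play once that modulus inequality is secured, but one needs to handle the degenerate possibilities—e.g. $\varphi_\lambda$ having a root exactly on $\mathbb{T}$ is excluded precisely because $\lambda \notin \varphi(\mathbb{T})$, and the Poincar\'{e} hypothesis excludes repeated-modulus roots—and to confirm that $n \geq 1$ (or to dispatch $n = 0$ separately) so that at least one root lies outside $\overline{\mathbb{D}}$. I would therefore spend most of the writeup carefully establishing $|\lambda_1| > 1$ under the stated index-zero assumption, and treat the rest as a direct transcription of the proof of Theorem~\ref{injective}.
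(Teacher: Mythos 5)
Your proposal is correct and follows essentially the same route as the paper: reduce via Theorem \ref{specT} to showing that index-zero Fredholm points off $\varphi(\mathbb T)$ are not eigenvalues, use $\varphi-\lambda=\varphi_\lambda/z^m$ on $\mathbb T$ and the argument principle to see that $\ind(T_{\varphi}-\lambda I)=0$ forces $\varphi_\lambda$ to have exactly $m\ge 1$ zeros in $\mathbb D$, and then apply Theorem \ref{injective} (whose hypotheses are now met) to get $\ker T_{\varphi-\lambda}=\{0\}$ and hence invertibility; in fact you make explicit the verification of the ``at least one zero inside $\mathbb D$'' hypothesis that the paper's proof leaves implicit. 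One small clarification: the characteristic roots of the limiting recursion are the reciprocals of the zeros of $\varphi_\lambda$, so the dominant root of modulus greater than $1$ comes precisely from the $m\ge 1$ zeros of $\varphi_\lambda$ \emph{inside} $\mathbb D$, not from zeros outside, so your parenthetical concern about needing $n\ge 1$ zeros outside (and a separate treatment of $n=0$) is unnecessary.
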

%---------  proof
\begin{proof} In light of Theorem \ref{specT}, it only remains to show that \[\sigma_p(T_\varphi)\bigcap \{\lambda\in \mathbb{C}: \lambda \notin \sigma_e(T_\varphi), \ind(T_\varphi-\lambda I)=0\}=\emptyset.\]
In fact, we will show that if $T_\varphi - \lambda$ is Fredholm  with $\text{ind}(T_\varphi -\lambda )=0$, then it is invertible. First, we observe that
\begin{align}
\varphi(z) - \lambda &= \frac{z^m \overline{q(z)}+ z^mp(z) - \lambda z^m}{z^m} \notag \\
                     &=\frac{1}{z^m}\left(1+\sum_{i=1, i \neq m}^{m+n}\alpha_{i-m}~z^{i}+ (a_0 -\lambda )z^m \right)=\frac{\varphi_\lambda}{z^m},\ \  z \in \mathbb T.
\end{align}
Without loss of generality, we prove the claim for the case $\lambda =0$, i.e., we show that $0\notin \sigma(T_\varphi)$.
Using Theorem \ref{injective}, we have $\ker\left(T_\varphi \right) =\{0\}$. Furthermore, since $T_\varphi$ is Fredholm with index $0$, it must be invertible. This completes the proof.
\end{proof}
Theorem \ref{main3} indicates that $\sigma(T_\varphi)$ in this case is the region enclosed by the closed path $\varphi(\T)$. Hence,
it has the following direct consequence.
\begin{corollary} Let $\varphi$ be as in Theorem \ref{main3}. Then $\sigma(T_{\varphi})$ is path-connected.\end{corollary}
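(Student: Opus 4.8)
The plan is to read off the explicit description of the spectrum provided by Theorem~\ref{main3},
\[
\sigma(T_\varphi)=\varphi(\mathbb T)\,\bigcup\,\bigl\{\lambda\in\mathbb C:\ \lambda\notin\varphi(\mathbb T),\ \wind(\varphi(\mathbb T),\lambda)\neq 0\bigr\},
\]
and to show by elementary planar topology that this set --- a closed curve together with the bounded complementary regions it winds around --- is path-connected, which is exactly the ``region enclosed by $\varphi(\mathbb T)$'' picture recorded just before the statement. The first step is to note that $\varphi(\mathbb T)$ is path-connected, since it is the continuous image of the path-connected set $\mathbb T$ under the harmonic polynomial $\varphi$. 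It therefore suffices to join every point of $\sigma(T_\varphi)\setminus\varphi(\mathbb T)$ to $\varphi(\mathbb T)$ by a path lying in $\sigma(T_\varphi)$.

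Fix such a $\lambda$. Because $\wind(\varphi(\mathbb T),\cdot)$ vanishes on the unbounded component of $\mathbb C\setminus\varphi(\mathbb T)$, the point $\lambda$ lies in a \emph{bounded} component $U$ of $\mathbb C\setminus\varphi(\mathbb T)$ on which the winding number is a fixed nonzero integer; in particular $U\subseteq\sigma(T_\varphi)$. Since $U$ is a connected component of the open set $\mathbb C\setminus\varphi(\mathbb T)$, it is open, and $\partial U\subseteq\varphi(\mathbb T)$. Now take any ray $\gamma(t)=\lambda+tv$, $t\geq 0$, from $\lambda$, and set $t_0=\sup\{s\geq 0:\ \gamma([0,s])\subseteq U\}$. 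As $U$ is open, $t_0>0$; as $U$ is bounded, $t_0<\infty$; and by continuity $\gamma(t_0)\in\overline U\setminus U=\partial U\subseteq\varphi(\mathbb T)$, while $\gamma([0,t_0))\subseteq U$. Hence $\gamma|_{[0,t_0]}$ is a path in $U\cup\varphi(\mathbb T)\subseteq\sigma(T_\varphi)$ connecting $\lambda$ to the point $\gamma(t_0)\in\varphi(\mathbb T)$.

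Combining the two steps finishes the argument: given $\lambda_1,\lambda_2\in\sigma(T_\varphi)$, first slide each $\lambda_j$ (if it is not already on $\varphi(\mathbb T)$) along such a segment to a point $\mu_j\in\varphi(\mathbb T)$, then join $\mu_1$ to $\mu_2$ inside $\varphi(\mathbb T)$, and concatenate; the result is a path in $\sigma(T_\varphi)$. The whole proof is point-set topology of the plane, so there is no serious obstacle; the only point needing care --- hence the main (mild) obstacle --- is checking that a component $U$ of the complement of the curve satisfies $\partial U\subseteq\varphi(\mathbb T)$ and that $t_0$ above is genuinely the first time the ray meets $\varphi(\mathbb T)$, so that the connecting segment cannot slip into a zero-winding region (which is not part of $\sigma(T_\varphi)$) before reaching the curve.
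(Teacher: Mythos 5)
Your proof is correct and follows essentially the same route as the paper, which simply reads off from Theorem~\ref{main3} that $\sigma(T_\varphi)$ is the curve $\varphi(\mathbb T)$ together with the bounded complementary components of nonzero winding number and declares connectedness a direct consequence. You have merely supplied the routine planar point-set topology (boundary of a complementary component lies on the curve, ray exit argument) that the paper leaves implicit.
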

\noindent This result is reminiscent of a fact in the Hardy space over the unit disc, where Widom's theorem \cite{Doug,Widom1} asserts that every Toeplitz operator with essentially bounded symbol has a path-connected spectrum.
%-------------------------------corollary
\begin{corollary}
Let $\varphi$ be as in Theorem \ref{main3} and assume the associated polynomial $\varphi_0$ is Poincar\'{e}. Then the following are equivalent.
\begin{enumerate}
\item $T_\varphi$ is invertible.
\item $T_\varphi$ is Fredholm with index $0$.
\item $\varphi_0$ has $m$ zeros in $\D$ and no zeros in $ \T$.
\end{enumerate}
\end{corollary}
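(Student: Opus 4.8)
The plan is to prove the equivalence of (1), (2), (3) in the corollary by chaining together the tools already assembled. The implication $(1)\Rightarrow(2)$ is trivial, since an invertible operator is Fredholm of index $0$. For $(2)\Rightarrow(3)$, I would use Lemma \ref{spec} together with the index formula: since $\varphi$ is continuous on $\overline{\D}$, we have $\sigma_e(T_\varphi)=\varphi(\T)$, and $0\notin\sigma_e(T_\varphi)$ means $\varphi(\T)$ does not pass through $0$, equivalently $\varphi_0$ has no zeros on $\T$ (recall $\varphi(z)=\varphi_0(z)/z^m$ on $\T$, and $z^m$ never vanishes there). Then $\ind(T_\varphi)=-\wind(\varphi(\T),0)$, and by the argument principle this winding number equals the number of zeros of the rational function $\varphi_0(z)/z^m$ inside $\D$, counted with multiplicity, which is (number of zeros of $\varphi_0$ in $\D$) minus $m$ (the pole of order $m$ at the origin; note $\varphi_0(0)=1\neq 0$). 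So $\ind(T_\varphi)=0$ forces $\varphi_0$ to have exactly $m$ zeros in $\D$, giving (3).

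For $(3)\Rightarrow(1)$, I would argue as follows. Condition (3) says $\varphi_0$ has no zeros on $\T$, so $0\notin\varphi(\T)=\sigma_e(T_\varphi)$ and $T_\varphi$ is Fredholm; the same argument-principle count as above shows $\ind(T_\varphi)=0$. Now I invoke Theorem \ref{main3}: since $\varphi_0$ is assumed Poincar\'{e} (this is exactly the hypothesis of the corollary, applied at $\lambda=0$, which is allowed because $0\notin\varphi(\T)$), the conclusion of Theorem \ref{main3} — more precisely the key step in its proof — tells us that $\ker T_\varphi=\{0\}$. Actually the cleanest route is to note that Theorem \ref{main3} already establishes $0\notin\sigma(T_\varphi)$ whenever $T_\varphi-0\cdot I$ is Fredholm of index $0$ and $\varphi_0$ is Poincar\'{e}; that is precisely invertibility. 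Hence (1) holds.

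The main obstacle, and the only place requiring genuine care, is the passage between the analytic condition "$\varphi_0$ has $m$ zeros in $\D$" and the operator-theoretic conditions. One must be scrupulous about the branch issues and the behavior at $z=0$: the identity $\varphi(z)-\lambda=\varphi_\lambda(z)/z^m$ holds only on $\T$, not on all of $\overline{\D}$, since $\varphi$ has the $\overline{z}^m$ term which is not $z^{-m}$ off the circle. So the winding-number computation must be done with $\varphi$ restricted to $\T$, where the substitution is valid, and then the argument principle is applied to the meromorphic function $\varphi_0(z)/z^m$ on $\D$; the count "zeros of $\varphi_0$ in $\D$ minus pole order $m$ at $0$" must account for the fact that $\varphi_0(0)=1$ so the origin contributes exactly $-m$ and nothing cancels. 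I would also double-check that the Poincar\'{e} hypothesis is being used in the right place — it is what licenses the application of Theorem \ref{main3} (and, upstream, of Theorem \ref{injective} via Poincar\'{e}'s Theorem \ref{thm1.1}) to conclude triviality of the kernel; without it one only gets the index-zero statement, not invertibility.

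Assembling these three implications in the order $(1)\Rightarrow(2)\Rightarrow(3)\Rightarrow(1)$ completes the proof, with the bulk of the content being the argument-principle identity $\wind(\varphi(\T),0)=(\#\text{ zeros of }\varphi_0\text{ in }\D)-m$ and the appeal to Theorem \ref{main3} for the final invertibility step.
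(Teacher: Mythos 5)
Your proof is correct and follows essentially the same route the paper intends for this corollary (which it states without a written proof): Lemma \ref{spec} plus the argument principle applied to $\varphi_0(z)/z^m$ to translate Fredholmness and index $0$ into the zero-count condition (3), and then Theorem \ref{main3} (ultimately Theorem \ref{injective}, whose hypotheses are met since $\varphi_0$ is Poincar\'{e} and has $m\ge 1$ zeros in $\D$) to upgrade index $0$ to invertibility. Your cautionary remarks about the identity $\varphi-\lambda=\varphi_\lambda/z^m$ holding only on $\T$ and about the pole of order $m$ at the origin are exactly the right points of care.
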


Although the condition in Theorem \ref{main3} that  $\varphi_\lb$ is Poincar\'{e} for every $\lb \notin \varphi(\mathbb T)$ may appear rather restrictive, it is in fact indispensable.~Counterexamples are known to exist for the case $\varphi=\overline{z}+p$, where $p$ is a polynomial with degree $k>2$ \cite[Theorem 4.1]{Invetible2020}.
Nevertheless, it is possible to weaken this condition in some way. Further study is needed in this direction.

%---------------------------------------------------------------
% The spectrum of $T_{\overline{z}^m + \alpha z^m + \beta}$
%-----------------------------------------------------------------------------------------------
\section{The Spectral Properties of $T_{\overline{z}^m + \alpha z^m + \beta}$ }

\subsection{The spectrum of $T_{\overline{z}^m + \alpha z^m + \beta}$ }

Lemmas \ref{thm ker} and \ref{thm ran} establish a connection between Toeplitz operator theory on the Bergman space and linear differential equation theory. In this section, Poincar\'{e}'s theorem  will be used to describe the spectrum of $T_\varphi$, where $\varphi (z)$ is of the form $\overline{z}^m + \alpha z^m + \beta$ with $m\geq 1$ and $\alpha, \beta \in \mathbb {C}$. In the case $m=1$, Zhao and Zheng \cite[Theorem 3.1]{spec2016} shows that $\sigma(T_\varphi)=\overline{\varphi (\mathbb D)}$. This section extend this result to the case $m\geq 1$.

%----------------------------------------$\overline{\varphi (\mathbb {D}) }\subset \overline{\sigma(T_\varphi)} = \sigma(T_\varphi).$
\begin{proposition}\label{inclusion}
Assume $\varphi (z)= \overline{z}^m + \alpha z^m + \beta$ with $\alpha, \beta \in \mathbb {C}, m \geq 1$.  Then $\overline{\varphi (\mathbb {D})} \subseteq \sigma (T_\varphi)$.
\end{proposition}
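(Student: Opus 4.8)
The plan is to show $\overline{\varphi(\mathbb{D})} \subset \sigma(T_\varphi)$ by proving that no $\lambda \in \varphi(\mathbb{D})$ lies in the resolvent set, and then invoking that the spectrum is closed to absorb the boundary $\varphi(\mathbb{T}) = \sigma_e(T_\varphi)$ (by Lemma~\ref{spec}), which is automatically contained in $\sigma(T_\varphi)$. Since $\sigma(T_\varphi) \supseteq \sigma_e(T_\varphi) = \varphi(\mathbb{T})$ already, and $\overline{\varphi(\mathbb{D})} = \varphi(\mathbb{D}) \cup \varphi(\mathbb{T})$ because $\varphi$ extends continuously to $\overline{\mathbb{D}}$ with $\varphi(\partial\mathbb{D}) = \varphi(\mathbb{T})$, it suffices to treat $\lambda \in \varphi(\mathbb{D})$, i.e. $\lambda = \varphi(w)$ for some $w \in \mathbb{D}$.

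The key step is the construction of an approximate eigenvector (or an obstruction to invertibility) for $T_\varphi - \lambda I$ when $\lambda = \varphi(w)$, $w \in \mathbb{D}$. The natural candidate is to use the normalized reproducing kernel $k_w(z) = \frac{1-|w|^2}{(1-\bar w z)^2}$ of $L^2_a(\mathbb{D})$, together with the identity $T_{\bar\psi} k_w = \overline{\psi(w)}\, k_w$ for $\psi \in H^\infty$, applied to $\psi(z) = z^m$; this gives $T_{\overline{z}^m} k_w = \overline{w}^m k_w$. Since $T_\varphi = T_{\overline{z}^m} + \alpha T_{z^m} + \beta I$ and $T_{z^m}$ is multiplication by $z^m$ followed by $P$, one does not get an exact eigenvector, but I expect that
\[
(T_\varphi - \varphi(w)I)^* \hat k_w = (T_{\overline{z}^m} + \bar\alpha T_{\overline{z^m}})\hat k_w - \overline{\varphi(w)}\hat k_w
\]
can be shown to tend to $0$ in norm as $|w| \to 1$ suitably, OR — more robustly — one works with $T_\varphi^* - \bar\lambda I = T_{\overline{\varphi}} - \bar\lambda I$ and uses that $T_{\overline{p}} k_w = \overline{p(w)} k_w$ for the analytic part: writing $\varphi = \overline{q} + p$ with $q(z) = z^m$, $p(z) = \alpha z^m + \beta$, we have $\overline{\varphi} = q + \overline{p} = z^m + \overline{p}$, so $T_{\overline{\varphi}}^{} k_w$ has the exact eigen-term $\overline{p(w)}k_w$ from $T_{\overline{p}}$, while $T_{z^m} k_w = P(z^m k_w)$ introduces the genuinely non-analytic piece. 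The cleanest route is probably to show directly that $T_\varphi - \lambda I$ is not bounded below for $\lambda \in \varphi(\mathbb{D})$: pick $w$ with $\varphi(w) = \lambda$, and test against the analytic function $h_w(z) = (z - w)/(1 - \bar w z) \cdot (\text{something})$, or against $k_w$ itself, estimating $\|(T_\varphi - \lambda I) k_w\|$ and showing it is small relative to $\|k_w\|$ in a limiting regime; alternatively, use that the Berezin transform of $T_\varphi - \lambda I$ vanishes at $w$, i.e. $\langle (T_\varphi - \lambda)k_w, k_w\rangle = \varphi(w) - \lambda = 0$, so $T_\varphi - \lambda I$ cannot be invertible because an invertible operator has Berezin transform bounded away from $0$ only if... — actually the Berezin transform vanishing at a point does not by itself preclude invertibility, so this needs the sharper statement that $\langle (T_\varphi - \lambda I)^* f, f \rangle$ and $\langle (T_\varphi - \lambda I) f, f\rangle$ are both controlled. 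I would instead use: if $T_\varphi - \lambda I$ were invertible, then for all $w$, $|\langle (T_\varphi - \lambda I)k_w, k_w\rangle| \le \|T_\varphi - \lambda I\| \cdot \|k_w\|^2$ gives nothing, but $\|(T_\varphi - \lambda I)^{-1}\|^{-1} \le \frac{|\langle (T_\varphi - \lambda I)k_w, k_w\rangle|}{\|k_w\|^2}$ is false in general — so the honest path is the approximate-eigenvector one.

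Concretely, here is the route I expect to work. Use the substitution via the automorphism $\sigma_w(z) = \frac{w - z}{1 - \bar w z}$ and its unitary weighted composition operator $U_w$ on $L^2_a(\mathbb{D})$; then $U_w^* T_\varphi U_w = T_{\varphi \circ \sigma_w}$, and since $\varphi \circ \sigma_w$ has the value $\varphi(w) = \lambda$ at $0$, it suffices to prove $0 \in \sigma(T_\psi)$ whenever $\psi$ is harmonic with $\psi(0) = 0$ of the appropriate form. For such $\psi$, test on the constant function $1$: $T_\psi 1 = P(\psi) = \sum_k \langle \psi, z^k\rangle \frac{z^k}{\|z^k\|^2}$, whose constant term is $\langle \psi, 1 \rangle = \psi(0) + (\text{correction from the } \overline{z}^m \text{ and } z^m \text{ terms, all of which have zero } 0\text{-Taylor coefficient in } P)$; in fact $P(\overline{z}^m \cdot 1) = 0$ for $m \ge 1$ by Lemma~\ref{p}, and $P(z^m) = z^m$, $P(1) = 1$, so $T_\psi 1 = \psi(0)\cdot 1 + (\text{terms vanishing at }0 \text{ to order} \ge 1)$... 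The sharper classical fact I would invoke is that for $\lambda \in \varphi(\mathbb{D})$, $T_\varphi - \lambda I$ fails to be bounded below because the net $\{k_w : \varphi(w) = \lambda,\ |w|\to 1\}$ — or, if $\varphi^{-1}(\lambda) \cap \mathbb{D}$ is a single interior point, a different test sequence — witnesses non-invertibility through the identity $(T_\varphi - \lambda)^* k_w = (T_{\overline{z^m}} - \overline{w}^m)\, k_w \cdot \bar\alpha$, and $T_{\overline{z^m}} k_w = \overline{w}^m k_w$ exactly since $z^m \in H^\infty$; wait — $T_{\overline{z^m}} = T_{\overline{z}^m}$ and indeed $T_{\overline{z}^m} k_w = \overline{w}^m k_w$, so the $\overline{z}^m$ part is exactly diagonalized, leaving $(T_\varphi - \lambda I)^* k_w = \alpha\, \overline{(T_{z^m})^* k_w - w^m k_w}$ — no. Let me restate once cleanly in the proof body.

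\medskip

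\textbf{The main obstacle.} The genuine difficulty is that $\lambda = \varphi(w)$ for an \emph{interior} $w$ does not give an exact eigenvalue of $T_\varphi$ (the analytic symbol $z^m$ in $T_{z^m}$ is not handled by the reproducing kernel the way $\overline{z}^m$ is), so one must produce an \emph{approximate} eigenvector or otherwise certify non-invertibility. I expect the clean way is: (i) reduce to $\lambda = 0$, $w = 0$ via the unitary $U_w$ and $U_w^* T_\varphi U_w = T_{\varphi\circ\sigma_w}$; (ii) note $T_{\overline{z}^m} = (T_z^*)^m$, so $T_{\overline{z}^m}$ has a big kernel ($\dim = m$, spanned by $1, z, \dots, z^{m-1}$), and estimate how far $T_\varphi$ sits from $T_{\overline{z}^m}$; more precisely, use that $T_{\varphi\circ\sigma_w} = T_{\overline{z}^m} + (\text{lower-order terms scaled by the size of }\alpha, \beta\text{ after composition})$ together with the explicit kernel description from the earlier sections of the paper — namely Lemma~\ref{poly ker} / Theorem~\ref{cor3} describe $\ker T_{\overline{z}^m + cz^n}$ exactly — to show $T_{\varphi\circ\sigma_w}$ is either non-injective or has non-closed range. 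Since the paper has already computed (in Section 4) that $\ker T_{\overline{z}^m + \alpha z^m + \beta}$ is $m$-dimensional precisely when $\alpha z^{2m} + \beta z^m + 1$ has all zeros outside $\overline{\mathbb{D}}$, and since composing $\varphi$ with $\sigma_w$ for suitable $w$ drives the associated polynomial to acquire a zero crossing $\mathbb{T}$, the obstruction to invertibility should come from the range not being closed at the transition. So the plan is to combine the kernel computations of Section~4 with the unitary-equivalence reduction, and then handle the boundary $\varphi(\mathbb{T})$ cheaply via $\sigma_e(T_\varphi) = \varphi(\mathbb{T}) \subseteq \sigma(T_\varphi)$ and closedness of the spectrum. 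The delicate point, and the one I would spend the most care on, is the range-non-closedness (equivalently, $T_\varphi - \lambda I$ not bounded below) at points $\lambda$ where $T_\varphi - \lambda I$ happens to have trivial kernel but is not Fredholm-invertible — this is exactly the phenomenon that makes $\overline{\varphi(\mathbb{D})}$, rather than some smaller set, show up in the spectrum.
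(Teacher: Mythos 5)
Your proposal never closes the essential step, and it misses the short argument that actually proves this inclusion. The point you should have exploited is purely geometric plus Fredholm theory: after normalizing $\alpha\ge 0$ by a rotation of the variable and of the symbol, write $\varphi(re^{i\theta})=x+iy$ with $x=(\alpha+1)r^m\cos m\theta+\beta_1$, $y=(\alpha-1)r^m\sin m\theta+\beta_2$, so that $\varphi(\mathbb D)$ is exactly the open elliptical region $\frac{(x-\beta_1)^2}{(\alpha+1)^2}+\frac{(y-\beta_2)^2}{(\alpha-1)^2}<1$ enclosed by the curve $\varphi(\mathbb T)$ (traversed $m$ times; the degenerate case $\alpha=1$ is handled by McDonald--Sundberg). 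Consequently every $\lambda\in\varphi(\mathbb D)$ either lies on $\varphi(\mathbb T)=\sigma_e(T_\varphi)$ or satisfies $\wind(\varphi(\mathbb T),\lambda)\neq 0$, so by Lemma \ref{spec} the operator $T_\varphi-\lambda I$ is Fredholm with nonzero index, and Theorem \ref{specT} puts $\lambda$ in $\sigma(T_\varphi)$; closedness of the spectrum then absorbs $\overline{\varphi(\mathbb D)}$. No approximate eigenvectors, Berezin transforms, or kernel computations are needed for this direction --- the special feature of the symbol $\overline z^m+\alpha z^m+\beta$ (in contrast to the counterexample $\overline z+z^2-z$ of Remark \ref{remark4.2}) is precisely that the image of $\mathbb D$ coincides with the region of nonzero winding number.

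By contrast, each of your three routes is either abandoned or has a genuine hole. The Berezin-transform idea you correctly discard yourself. The approximate-eigenvector route is never carried out: you never exhibit a sequence $f_n$ with $\|(T_\varphi-\lambda)f_n\|/\|f_n\|\to 0$ for an interior $\lambda$, and the reproducing kernel $k_w$ only diagonalizes the co-analytic part, which you notice but do not resolve. The M\"obius reduction $U_w^*T_\varphi U_w=T_{\varphi\circ\sigma_w}$ is a true identity, but $\varphi\circ\sigma_w=\overline{\sigma_w^m}+\alpha\sigma_w^m+\beta$ is no longer of the form $\overline z^m+f(z)$, so Lemma \ref{poly ker} and Theorem \ref{cor3} do not apply to it, and your claim that ``the range is not closed at the transition'' is asserted, not proved. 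Finally, the machinery you gesture at (the Section 4 kernel analysis via Poincar\'e's theorem) belongs to the \emph{reverse} inclusion $\sigma(T_\varphi)\subset\overline{\varphi(\mathbb D)}$ in Theorem \ref{main m}, where one must rule out eigenvalues in the index-zero holes; importing it here conflates the easy direction with the hard one.
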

\begin{proof}%----proof
For the case $\alpha =1$, refer to \cite{MC1979}.
Without loss of generality, set $\alpha >0$ and  $\alpha \neq 1$. For more details, see \cite{spec2016}.
Set $z=re^{\text{i}\theta} \in \mathbb {D}$, $ \varphi (z) = x+\text{i}y$ and $\beta= \beta_1 + \text{i} \beta_2$.
 We have
\begin{align}
 x &=( \alpha + 1)  r^m \cos (m\theta) + \beta_1,\\
 y & = ( \alpha -1 ) r^m \sin (m\theta) + \beta_2.
\end{align}
It follows that
\begin{eqnarray}
 \frac{(x-\beta_1)^2}{( \alpha + 1)^2 }  + \frac{(y-\beta_2)^2}{( \alpha - 1)^2  } = r^{2m} < 1.
\end{eqnarray}
In light of Theorem \ref{specT}, we have
\iffalse
\begin{align*}
 \sigma (T_{\varphi}) &= \varphi (\mathbb T ) \bigcup \big\{ \lambda \in \mathbb {C}: \lambda \notin   \varphi (\mathbb T), \text{wind}( \varphi (\mathbb T), \lambda ) \neq 0 \big  \}\\
 & \bigcup \big\{ \lambda \in \mathbb {C}:   \lambda \in  \sigma_p(T_{\varphi}),  \lambda \notin  \sigma_e(T_{\varphi}), \text{ind} (T_{\varphi}- \lambda I)=0 \big\},
\end{align*}
 we have
 \fi
 \begin{align*}
 \varphi ( \mathbb {D}) =\{ x+\text{i}y:  \frac{(x-\beta_1)^2}{( \alpha + 1)^2 }  + \frac{(y-\beta_2)^2}{( \alpha - 1)^2  } < 1 \} \subseteq  \sigma (T_{\varphi}).
\end{align*}
It follows that $\overline{\varphi ( \mathbb {D})}\subseteq \sigma (T_{\varphi})$, and this completes the proof.
\end{proof}
%--------------------------- \sigma (T_\varphi) \subset \overline{\varphi (\mathbb {D})}
%
\begin{remark}\label{remark4.2}
It is worth noting that the inclusion $\overline{\varphi (\mathbb {D})} \subseteq \sigma (T_\varphi)$ does not hold for a general harmonic symbol $\varphi$. For example, it is shown \cite[Theorem 4.1]{spec2016} that for $\varphi (z)=\bar{z}+z^2-z$, one has
$[0, 1)\subseteq \overline{\varphi ({\D})}$ but $[0, 1)\cap \sigma(T_\varphi)=\emptyset$.
\end{remark}

%----------
The following is the main result of this section.
\begin{theorem}\label{main m}
Assume $\varphi (z)= \overline{z}^m + \alpha z^m + \beta$ with $m\geq 1, \alpha, \beta \in \mathbb {C}$.  Then $ \sigma (T_\varphi) = \overline{\varphi (\mathbb {D})}.$
\end{theorem}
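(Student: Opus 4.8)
The plan is to prove the two inclusions $\overline{\varphi(\D)}\subset\sigma(T_\varphi)$ and $\sigma(T_\varphi)\subset\overline{\varphi(\D)}$ separately. The first inclusion is already established in Proposition \ref{inclusion}, so the entire burden is the reverse inclusion $\sigma(T_\varphi)\subset\overline{\varphi(\D)}$. Equivalently, I must show that every $\lambda\notin\overline{\varphi(\D)}$ lies in the resolvent set. Since $\sigma_e(T_\varphi)=\varphi(\T)$ by Lemma \ref{spec}, and $\varphi(\T)\subset\overline{\varphi(\D)}$ (indeed $\varphi(\T)$ is the boundary ellipse and $\varphi(\D)$ its interior when $\alpha\neq\pm1$; the degenerate cases $\alpha=1$ and $\alpha=-1$ can be treated by the $m=1$ reference \cite{spec2016} composed with $z\mapsto z^m$, or handled directly), such a $\lambda$ is outside $\sigma_e(T_\varphi)$, so $T_\varphi-\lambda I$ is Fredholm; by Theorem \ref{specT} it suffices to show (a) $\ind(T_\varphi-\lambda I)=0$ and (b) $\lambda\notin\sigma_p(T_\varphi)$, i.e. $\ker(T_\varphi-\lambda I)=\{0\}$.

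For step (a), I would compute the index via the winding number formula from Lemma \ref{spec}: $\ind(T_\varphi-\lambda I)=-\wind(\varphi(\T)-\lambda,0)=-\wind(\varphi(\T),\lambda)$. Writing $z=e^{i\theta}$, the curve $\varphi(\T)$ is $\theta\mapsto(\alpha+1)\cos m\theta+\beta_1+i[(\alpha-1)\sin m\theta+\beta_2]$, which traces an ellipse (centered at $\beta$, semi-axes $|\alpha+1|$ and $|\alpha-1|$) $m$ times. For $\lambda$ strictly outside the closed elliptical disk $\overline{\varphi(\D)}$, the winding number is $0$, giving $\ind(T_\varphi-\lambda I)=0$. (One must be slightly careful about orientation and the sign of $\alpha-1$, but in all cases a point outside the ellipse has zero winding number.) This step is routine.

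Step (b) is the crux. For $\lambda\notin\overline{\varphi(\D)}$, set $\psi(z)=\varphi(z)-\lambda=\overline{z}^m+\alpha z^m+(\beta-\lambda)$; this is again of the form covered by Lemma \ref{thm ker} with $f(z)=\alpha z^m+(\beta-\lambda)$, so $T_\psi g=0$ forces the recursion $d_{m+k}=-\frac{m+1+k}{k+1}\sum_{i=0}^{k}a_{k-i}d_i$ where $a_0=\beta-\lambda$, $a_m=\alpha$, and all other $a_i=0$. This recursion couples $d_{m+k}$ to $d_k$ and $d_{k-m}$ only, so it splits into $m$ independent scalar recurrences indexed by the residue class of the subscript mod $m$; each is a second-order linear difference equation of Poincaré type with characteristic polynomial essentially $\alpha t^2+(\beta-\lambda)t+1$ (the roots being $z_0^{-m},z_1^{-m}$ in the notation used earlier, where $\alpha z^{2m}+(\beta-\lambda)z^m+1=\alpha(z^m-z_0^m)(z^m-z_1^m)$). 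I would show that $\lambda\notin\overline{\varphi(\D)}$ forces both roots $z_0,z_1$ of $\alpha z^{2m}+(\beta-\lambda)z^m+1$ to satisfy $|z_j|>1$ — this is the geometric translation of $\lambda$ lying outside the ellipse, and it is exactly the scenario where the earlier kernel computation applies, showing $\dim\ker T_\psi=m$ \emph{if} one only asked for boundedness. But boundedness is not the issue here; I need $\ker T_\psi=\{0\}$ \emph{in $L^2_a(\D)$}, and the explicit kernel functions $g_j$ produced earlier \emph{are} in $L^2_a(\D)$ when $|z_0|,|z_1|>1$. This seems to contradict (b)!

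The resolution — and the genuine heart of the argument — must be that the inclusion $\overline{\varphi(\D)}\subset\sigma(T_\varphi)$ from Proposition \ref{inclusion} already accounts for these eigenvalues, so the apparent tension is illusory: when $\lambda\notin\overline{\varphi(\D)}$, the relevant constraint is \emph{not} that $|z_j|>1$ but rather that exactly one (or neither) of $|z_j|<1$ holds in a way that makes the $L^2_a$-summable solution of each scalar recurrence the trivial one. Concretely, I expect the correct dichotomy to be: $\lambda\in\varphi(\D)$ iff the polynomial $\alpha z^{2m}+(\beta-\lambda)z^m+1$ has a zero inside $\D$ (equivalently $\varphi_\lambda$ in the notation of Section 4 has $<m$ zeros in $\D$ fails / the winding/argument-principle count changes), and in that regime Poincaré's Theorem \ref{thm1.1} forces any nonzero solution of the difference equation to grow like the dominant root, hence to leave $\ell^2$-with-Bergman-weights, exactly as in the proof of Theorem \ref{injective}. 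So the real plan for (b) is: show that for $\lambda\notin\overline{\varphi(\D)}$ the characteristic polynomial $\alpha z^{2m}+(\beta-\lambda)z^m+1$ has all its zeros off $\T$ and, crucially, that the number of zeros inside $\D$ equals $m$ (half of them), so that within each residue class mod $m$ the two characteristic roots have one inside and one outside the disk with distinct moduli (Poincaré condition, which I would verify generically and then extend by the winding-number/continuity argument of Theorem \ref{main3}); then Poincaré's theorem forces $d_{(k+1)m+j}/d_{km+j}\to$ the larger-modulus root, so a nonzero solution grows geometrically and cannot lie in $L^2_a(\D)$, giving $\ker T_\psi=\{0\}$. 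Combining (a) and (b) with Theorem \ref{specT} yields $\lambda\notin\sigma(T_\varphi)$, hence $\sigma(T_\varphi)\subset\overline{\varphi(\D)}$, and with Proposition \ref{inclusion} we conclude $\sigma(T_\varphi)=\overline{\varphi(\D)}$.

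\textbf{Main obstacle.} The hard part is step (b): correctly identifying, for $\lambda$ outside the ellipse $\overline{\varphi(\D)}$, the exact zero-distribution of $\alpha z^{2m}+(\beta-\lambda)z^m+1$ relative to $\T$ (using Theorem \ref{dis} or a direct argument-principle computation) and then invoking Poincaré's Theorem \ref{thm1.1} to rule out nonzero $L^2_a$ solutions of the mod-$m$ scalar recurrences — while simultaneously verifying the Poincaré (distinct-moduli) hypothesis, which may fail on a measure-zero exceptional set of $\lambda$ and must be circumvented by a perturbation/continuity argument in the spirit of Theorem \ref{main3}. Reconciling this with the explicit bounded kernel functions of the previous section (which live in $L^2_a$ precisely when \emph{all} roots are outside $\D$, a situation that corresponds to $\lambda\in\overline{\varphi(\D)}$, not outside it) is the conceptual subtlety that must be handled carefully.
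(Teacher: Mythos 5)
Your route is the same as the paper's: Proposition \ref{inclusion} gives $\overline{\varphi(\D)}\subset\sigma(T_\varphi)$; for $\lambda\notin\overline{\varphi(\D)}$ one gets Fredholmness with index $0$, hence (by the argument principle applied to $\varphi-\lambda=\bigl(\alpha z^{2m}+(\beta-\lambda)z^m+1\bigr)/z^m$ on $\T$) exactly $m$ zeros of $\alpha z^{2m}+(\beta-\lambda)z^m+1$ inside $\D$ and $m$ outside; the kernel equation splits mod $m$ into second-order Poincar\'{e}-type recurrences; one rules out nonzero $L^2_a$ solutions and concludes invertibility from index $0$. Two comments on the details. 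First, the worry in your ``main obstacle'' paragraph about the distinct-moduli hypothesis failing on an exceptional set and needing a perturbation/continuity argument is unfounded: since exactly one of $z_0^m,z_1^m$ lies inside $\D$ and the other outside, the limiting characteristic roots $\mu_0=z_0^{-m}$, $\mu_1=z_1^{-m}$ satisfy $|\mu_0|>1>|\mu_1|$, so the Poincar\'{e} condition holds automatically and Theorem \ref{thm1.1} applies directly.

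Second, and this is a genuine gap, you write that ``Poincar\'{e}'s theorem forces $d_{(k+1)m+j}/d_{km+j}\to$ the larger-modulus root.'' Poincar\'{e}'s theorem only says the ratio converges to \emph{some} characteristic root, and Perron's theorem actually guarantees that the recurrence has a solution whose ratio tends to the \emph{smaller} root $\mu_1$ (with $|\mu_1|<1$); such a solution decays geometrically and its associated power series \emph{does} belong to $L^2_a(\D)$, so the argument as stated does not exclude a nonzero kernel element. The missing ingredient is the initial data from the first block of equations ($\frac{k+1}{k+m+1}d_{k+m}+(\beta-\lambda)d_k=0$ for $0\le k\le m-1$): after normalizing $b_{k_0}=1$ one gets $b_{k_0+m}=\mu_0+\mu_1$, and since both the recursion and these initial conditions are symmetric in $\mu_0,\mu_1$, each $b_{k_0+nm}$ is a symmetric polynomial of degree $n$ in $\mu_0,\mu_1$ whose dominant term carries the highest power of $\mu_0$; this pins the Poincar\'{e} limit to $\mu_0$, whence $|d_k|$ grows and $g\notin L^2_a(\D)$. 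This selection-of-the-dominant-root argument is exactly what the paper supplies (the same device as in the proof of Theorem \ref{injective}); your phrase ``exactly as in the proof of Theorem \ref{injective}'' gestures toward it, but attributing the choice of root to Poincar\'{e}'s theorem itself is precisely where the proof would fail.
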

\begin{proof}%----proof
Given Proposition \ref{inclusion}, we only need to prove $ \sigma (T_\varphi) \subseteq \overline{\varphi (\mathbb {D})}$, or equivalently, if $w\notin \overline{\varphi (\mathbb {D})}$, then $T_\varphi-w$ is invertible. Without loss of generality, we assume $w=0$, i.e., $\varphi(z)$ does not vanish in $\overline{\D}$. Then, Lemma \ref{spec} indicates that $T_\varphi$ is a Fredholm operator with index
\begin{eqnarray}\label{index}
\text{ind}(T_\varphi )= - \frac{1}{2\pi \text{i}}\int_{\varphi (\mathbb T)} \frac{1}{w} dw=0.
\end{eqnarray}
In particular, the range of $T_\varphi$ is closed. Notice that
\begin{eqnarray*}
\varphi (z)= \overline{z}^m + \alpha z^m + \beta = \frac{\alpha z^{2m}+ \beta z^m +1 }{z^m},~ z \in \mathbb T.
\end{eqnarray*}
The Argument Principle and  Identity (\ref{index}) then indicate that
\begin{eqnarray}\label{spec equ1}
\alpha z^{2m}+ \beta z^m +1=0
\end{eqnarray}
has exactly $m$ simple zeros $z_0 \varpi^k \neq 0$ inside $\mathbb {D}$ and $m$ simple zeros $z_1\varpi^k $ outside $\mathbb {D}$, where $\varpi = e^{i\frac{2\pi}{m}}$ and $0\le k \le m-1$.

%-------------------------
If there exists $g(z)= \sum_{k=0}^\infty d_kz^k \in L^2_a({\mathbb D})$ such that $T_\varphi g =0$, then using Theorem \ref{p}, we have
\begin{align*}
0&= T_\varphi g(z)= T_{\overline{z}^m} g (z)+ \big(\alpha z^m + \beta\big)g(z)\\
                & = \sum_{k=0}^{m-1} \Big( \frac{k+1}{k+m+1}d_{k+m}+ \beta d_k \Big)z^k+ \sum_{k=m}^\infty \Big(   \frac{k+1}{k+m+1}d_{k+m}+ \alpha d_{k-m} + \beta d_k  \Big)z^k,
\end{align*}
 which implies
 \begin{eqnarray*}
  \left\{ \begin{aligned}
                           & \frac{k+1}{k+m+1}d_{k+m}+ \beta d_k =0, \ \ & & 0 \leq k\leq m-1, \\
                           & \frac{k+1}{k+m+1}d_{k+m}+ \alpha d_{k-m} + \beta d_k =0,\ \ & & k \geq m.
                          \end{aligned}
                           \right.
 \end{eqnarray*}
Setting $b_k = \frac{d_k}{k+1}$ for $k \geq 0$, we have
\begin{eqnarray}\label{bkm}
  \left\{ \begin{aligned}
                           & b_{k+m}+ \beta b_k =0, \ \ & & 0 \leq k\leq m-1, \\
                           & b_{k+m}+ \beta b_k+\alpha\frac{k-m+1}{k+1} b_{k-m}  =0,\ \ & & k \geq m.
                          \end{aligned}
                           \right.
 \end{eqnarray}
Since $g\neq 0$ by assumption, the recursion relation above indicates that $b_{k_0} \neq 0$ for some $0 \leq k_0 \leq m-1$.
For simplicity, we assume that $g$ is rescaled such that $b_{k_0}=1$.
So $b_{k_0+m}+ \beta =0~(\beta \neq 0)$ and
\begin{eqnarray}
b_{k+m}+ \beta b_k + \alpha\frac{k-m+1}{k+1} b_{k-m}  =0, \ \ k \geq m.
 \end{eqnarray}
Set $\lambda_0 = \frac{1}{z^m_0}, \lambda_1= \frac{1}{z^m_1}$ and observe  that they are the two roots of the equation $z^2+\beta z+\alpha=0$. Hence, $\lambda_0\lambda_1=\alpha,\lambda_0+\lambda_1=-\beta$, and it follows from (\ref{bkm}) that
\begin{align}\label{bkm2}
 b_{k_0+(n+1)m}&-(\lambda_0+\lambda_1) b_{k_0+nm} \notag \\
&+\lambda_0\lambda_1 \frac{(n-1)m+1}{nm+1}b_{k_0+(n-1)m}=0\ ( n \ge 1)
\end{align}
with initial condition $b_{k_0}=1, b_{k_0+m}=\lambda_0+\lambda_1$. This is a case of  Difference Equation (\ref{1.0}), where $d=2, t=n=1, 2, ...$, and it is Poincar\'{e} with
$c_0=\lambda_0\lambda_1, c_1=-(\lambda_0+\lambda_1)$. Poincar\'{e}'s Theorem then implies that
\begin{equation}\label{poincare}
\lim_{n\to \infty}\frac{b_{k_0+(n+1)m}}{b_{k_0+nm}}= \lambda_i,\ \ \ i=0\ \text{or}\ 1.
\end{equation}
Since the initial conditions and the recursion (\ref{bkm2}) are both symmetric in $\lambda_0$ and $\lambda_1$, the term $b_{k_0+nm}$ must be a symmetric polynomial in $\lambda_0, \lambda_1$ of degree $n$. We assume $|\lambda_0|> 1 > |\lambda_1|$. If the leading term of $b_{k_0+nm}$, as a polynomial in $\lambda_0$, is denoted by $c(n)\lambda_0^{k(n)}\lambda_1^{n-k(n)}$ for some integer $1\leq k(n)\leq n$, then this term is the dominating term as $n\to \infty$. It follows that the limit in (\ref{poincare}) must be $\lambda_0$. But this implies that $g(z) \notin L^2_a({\mathbb D})$, which is a contradiction. Thus we get $\ker \left(T_{\varphi} \right)=\{0\}$. Since $T_\varphi$ is Fredholm with index $0$, we must also have $\ker\left(T^*_{\varphi}\right)  =\{0\}$, and hence $T_\varphi$ is invertible. This completes the proof.
\end{proof}

Remark \ref{remark4.2} and Theorem \ref{main m} motivate the following natural problem.
%----------------
\begin{problem}
Characterize harmonic symbols $\varphi$ for which $\sigma (T_\varphi) = \overline{\varphi (\mathbb {D})}$.
\end{problem}
%--------------------
%
%---------------------------------------

\subsection{Essential projective spectrum and Fredholm index}

To further analyze the spectral property of the Toeplitz operator $T_\varphi$, we recall that the projective spectrum of elements $A_1, ..., A_n$ in a Banach algebra ${\mathcal B}$ is defined as
\[ P(A)=\{z\in \C^n:\ A(z):=z_1A_1+\cdots +z_nA_n\ \text{is not invertible}\},\] and it has been investigated extensively since 2009. For details, we refer the reader to \cite{Ya24}.

Consider an infinite dimensional Hilbert space $\mathcal H$ and let $\mathcal{B}(\mathcal H)$ ($\mathcal{K}(\mathcal H)$) denote the $C^*$-algebra of bounded (resp. compact) linear operators on ${\mathcal H}$. The following definition thus makes a good sense.
\begin{definition}
For an infinite dimensional Hilbert space $\mathcal H$, the essential projective spectrum of linear operators $A_1, ..., A_n$ in $\mathcal{B}(\mathcal H)$ is defined as
\[P_e(A)=\{z\in \C^n:~ A(z)~ \text{is not Fredholm}\}.\]
\end{definition}
It is well-known that ${\mathcal K}(\mathcal H)$ is a closed two-sided ideal in $\mathcal B(\mathcal H)$. For $a\in \mathcal B(\mathcal H)$, we let $[a]$ denote the quotient of $a$ in the Calkin algebra ${\mathcal C}(\mathcal H):=\mathcal B(\mathcal H)/{\mathcal K}(\mathcal H)$. Thus, the essential projective spectrum $P_e(A)$ is simply the projective spectrum of $[A_1], ..., [A_n]$ in ${\mathcal C}(\mathcal H)$.

 Lemma \ref{spec} asserts that, for $\varphi \in C(\overline{ \mathbb {D}})$, the Toeplitz operator $T_\varphi$ is Fredholm if and only if $\varphi$ does not vanish on $\mathbb T$. Furthermore, in this case its index
\begin{eqnarray*}
\text{ind}(T_\varphi )= \mathrm{dim~ker} (T_\varphi)- \mathrm{dim~ker} (T^*_\varphi)= - \frac{1}{2\pi \text{i}}\int_{\varphi (\mathbb T)} \frac{1}{w} dw.
\end{eqnarray*}
When $\varphi$ is meromorphic, we have
 \[ \mathrm{ind}~(T_\varphi ) =- \frac{1}{2\pi \text{i}}\int_{\mathbb T} d\log \varphi,\] which is equal to the total number of poles minus the total number of zeros inside $\mathbb {D}$. In the case $\varphi (z)=\gamma \overline{z}^m + \alpha z^m + \beta$, and $z\in \mathbb T$, we can write

 \begin{eqnarray}\label{ind}
\varphi (z)= \frac{\alpha z^{2m}+ \beta z^m +\gamma}{z^m}=: \frac{\phi(z^m)}{z^m},
\end{eqnarray}
which extends meromorphically to $\mathbb{C}$. Obviously, the rational function $\frac{\phi(z^m)}{z^m}$ has a pole at $0$ of multiplicity $m$, and two zeros in $\mathbb {C}$, each having multiplicity $m$. Thus, assuming $\phi$ does not vanish on $\mathbb T$, there are three possibilities: $\phi$ has no zeros, $m$ zeros, or $2m$ zeros in $\D$. Correspondingly, $\ind(T_\varphi )$ takes values $m$, $0$ or $-m$. For simplicity, we set $t=z^m$, and for $i=0, 1, 2$, define
\[\Omega_{i}=\big\{(\alpha, \beta, \gamma)\in {\C}^3:\ \alpha t^2+\beta t+\gamma\ \text{has}~ i ~\text{zeros ~in}~ \mathbb{D}  ~~~\text{and~has ~no~ zeros~ on }~\T \}. \]
We first check $\Omega_{i}, i = 0, 1, 2$ are path-connected. More generally, for a general $n$-degree polynomial $P_z(t)=t^n+ z_1t^{n-1}+ z_2t^{n-2}+\cdots+z_n$ and $0\leq k \leq n$, we define 
\begin{align}\label{connect}
 \Omega^n_k: = \big\{ z\in \C^n: P_z(t)~\text{has}~ k ~\text{zeros~ in } ~\D ~~~\text{and~has ~no~ zeros~ on }~\T \big\}. 
\end{align}
Then we have the following fact.
\begin{lemma}\label{p connect}
$ \Omega^n_k $ is path-connected for each $n$ and $k$.
\end{lemma}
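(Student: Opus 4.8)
The plan is to identify $\Omega^n_k$ with a subset of the space of monic degree-$n$ polynomials with no roots on $\mathbb{T}$, and to exhibit an explicit path between any two of its points. First I would parametrize: a point $z\in\Omega^n_k$ corresponds to a polynomial $P_z(t)=\prod_{j=1}^{k}(t-a_j)\prod_{\ell=1}^{n-k}(t-b_\ell)$ with $|a_j|<1$ and $|b_\ell|>1$ (roots listed with multiplicity, in some order). The idea is to move the roots independently: the inside roots $a_j$ can each be deformed continuously to $0$ within $\mathbb{D}$, and the outside roots $b_\ell$ can each be deformed continuously to a common large value, say to $2$, within $\{|t|>1\}$; since $\mathbb{D}$ and $\{|t|>1\}$ are each path-connected, such deformations exist. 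Throughout the deformation no root crosses $\mathbb{T}$, so the path stays in $\Omega^n_k$. This shows every point of $\Omega^n_k$ is connected by a path to the single polynomial $t^{n-k}(t-2)^{n-k}$ — wait, to the polynomial $t^{k}(t-2)^{n-k}$ — hence $\Omega^n_k$ is path-connected.

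In more detail, the key steps in order are: (1) choose, for the starting point $z$, an ordered list of roots $(a_1,\dots,a_k,b_1,\dots,b_{n-k})$ with the $a_j\in\mathbb{D}$ and $b_\ell\in\mathbb{C}\setminus\overline{\mathbb{D}}$ (this requires the hypothesis that $P_z$ has no root on $\mathbb{T}$, so every root lies strictly inside or strictly outside); (2) for each $j$ pick a path $a_j(s)$ in $\mathbb{D}$ from $a_j$ to $0$, and for each $\ell$ a path $b_\ell(s)$ in $\mathbb{C}\setminus\overline{\mathbb{D}}$ from $b_\ell$ to $2$; (3) define $z(s)\in\C^n$ to be the coefficient vector of $\prod_j(t-a_j(s))\prod_\ell(t-b_\ell(s))$, which depends continuously on $s$ because the elementary symmetric functions are continuous, and note $z(s)\in\Omega^n_k$ for all $s$ because the root count inside $\mathbb{D}$ is preserved; (4) conclude that the fixed polynomial $t^{k}(t-2)^{n-k}$ lies in the path-component of every $z\in\Omega^n_k$, so $\Omega^n_k$ is path-connected. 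Finally, specializing $n=2$ recovers that $\Omega_0,\Omega_1,\Omega_2$ (after the obvious normalization dividing $\alpha t^2+\beta t+\gamma$ by $\alpha$ when $\alpha\neq0$, and treating the degenerate stratum separately) are path-connected.

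The main subtlety — and the step I would be most careful about — is the passage between roots and coefficients: one must check that the coefficient map is genuinely continuous and that the construction is well-defined despite the ambiguity in ordering repeated roots and in choosing which roots are "inside" versus "outside." Continuity of the elementary symmetric polynomials handles the first point cleanly. For the second, the point is that $\Omega^n_k$ is defined purely by the \emph{number} of zeros in $\mathbb{D}$, an invariant of the unordered root multiset, so any consistent choice of ordering and of the individual root-paths produces a valid path in $\Omega^n_k$; no continuity in $z$ of the ordering itself is needed, since we fix one $z$ at a time and only need the path $z(s)$ to be continuous in $s$. A small technical point worth a remark: when $P_z$ has a root of multiplicity $>1$ inside $\mathbb{D}$ we simply move all its copies to $0$ simultaneously, which is harmless. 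With these observations the argument is complete.
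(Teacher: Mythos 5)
Your proof is correct, and it takes a more direct route than the paper's. You connect an arbitrary $z\in\Omega^n_k$ to the fixed basepoint $t^k(t-2)^{n-k}$ by moving all $k$ interior roots to $0$ inside $\mathbb{D}$ and all $n-k$ exterior roots to $2$ inside $\mathbb{C}\setminus\overline{\mathbb{D}}$, using continuity of the elementary symmetric functions to turn root paths into a coefficient path; since no root ever touches $\mathbb{T}$, the zero count in $\mathbb{D}$ is constant and the path stays in $\Omega^n_k$. The paper instead argues by induction on the degree: it factors out a single root $\lambda$ (in $\mathbb{D}$ when $k\ge 1$, outside $\overline{\mathbb{D}}$ when $k=0$), writes $\Omega^n_k=\bigcup_{\lambda\in\mathbb{D}}J(-\lambda)\,\Omega^{n-1}_{k-1}$ (resp. $\bigcup_{|\lambda|>1}J(-\lambda)\,\Omega^{n-1}_{0}$) via an explicit linear map $J(-\lambda)$ on coefficient vectors, and reduces to the trivially connected $\Omega^1_0$. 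Both arguments rest on the same root--coefficient correspondence; yours dispenses with the induction and with the (slightly terse, in the paper) step that such a union of images is path-connected — which really needs the continuity of $(\lambda,\alpha)\mapsto J(-\lambda)\alpha$ on the path-connected set $\mathbb{D}\times\Omega^{n-1}_{k-1}$ — while the paper's factorization has the minor advantage of exhibiting the linear recursion structure between $\Omega^n_k$ and $\Omega^{n-1}_{k-1}$. Your handling of the two genuine subtleties (roots split strictly inside/outside because none lie on $\mathbb{T}$; the ordering ambiguity of repeated roots is irrelevant since only the unordered multiset matters and continuity is needed only in the path parameter $s$) is sound; just delete the "wait" self-correction and state the basepoint $t^{k}(t-2)^{n-k}$ once, and note that the normalization remark about $\alpha=0$ concerns only the application to $\Omega_0,\Omega_1,\Omega_2\subset\mathbb{C}^3$, not the lemma itself, which is stated for monic $P_z$.
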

\begin{proof}
For each $z \in \Omega^n_k$ with $ k \ge 1$, let $\lb$ be a zero of $P_z(t)$ such that $|\lb| < 1$.
Thus we can write \[ P_z(t)= ( t- \lb ) ( t^{n-1}+ \alpha_1t^{n-2} + \alpha_2t^{n-3} + \cdots + \alpha_{n-1}),\]
where $z_1= \alpha_1 - \lb$, $z_j = \alpha_j - \lb \alpha_{j-1}$, for $2 \le j \le n-1$, and $z_n = -\lb \alpha_{n-1}.$
Therefore,
\begin{eqnarray*}
\begin{pmatrix}
-\lb & 1 & 0 &\cdots  \\
0 & -\lb & 1& \cdots  \\
\vdots & \vdots & \ddots & \vdots \\
0 & 0 & \cdots & -\lb
\end{pmatrix}
\begin{pmatrix}
1  \\
\alpha_1\\
\vdots\\
\alpha_{n-1}
\end{pmatrix}
:=J(-\lb) \begin{pmatrix}
1  \\
\alpha_1\\
\vdots\\
\alpha_{n-1}
\end{pmatrix}
= \begin{pmatrix}
z_1  \\
z_2\\
\vdots\\
z_n
\end{pmatrix}.
\end{eqnarray*}
Clearly, $(\alpha_1, \dots, \alpha_{n-1}) \in \Omega^{n-1}_{k-1}$. Therefore $ \Omega^n_k = \bigcup_{\lb \in \D }J(-\lb) \Omega^{n-1}_{k-1}.$
Hence if $\Omega^{n-1}_{k-1}$ is path-connected, then so is $\Omega^n_k~(k \ge 1).$
Thus, the problem is reduced whether $\Omega^n_0 $ is path-connected for each $n \ge 0.$
Using ideas similar to the above argument, we can check that $\Omega^n_0 = \bigcup_{|\lb | >1}J(-\lb) \Omega^{n-1}_0.  $
Hence if $ \Omega^{n-1}_0 $ is path-connected, then $\Omega^n_0 $ is path-connected.
So the problem is  reduced to whether $\Omega^1_0 $ is path-connected, which is clearly true.
\end{proof}

To be consistent in notations, we write $T_\varphi$ as the linear pencil $A (\Lambda):= \gamma  T_{\overline{z}^m} + \alpha  T_{z^m} + \beta  I$ with $\Lambda=( \alpha,~\beta,~\gamma) \in {\C}^3$. Then, combining \eqref{num} and the above lemma, we have the following theorem.% under the condition of $|\alpha|\neq 1$~and~$1-|\alpha|^2 \neq|\alpha \overline{\beta} - \beta|$.
\begin{theorem}%----------------------------- index
Assume $A (\Lambda)= \gamma  T_{\overline{z}^m} + \alpha  T_{z^m} + \beta  I$ and $P^c_e(A):=\C^3\setminus P_e(A) = \Omega_0 \cup \Omega_1  \cup  \Omega_2.$ Then the following statements hold.

$\mathrm{(1)}$~$$ \Omega_0= \big\{ ( \alpha,~\beta,~\gamma) \in {\C}^3:   |\gamma|^2-|\alpha|^2>|\alpha \overline{\beta} -\beta\overline{\gamma}|~\big\},$$
and on $ \Omega_0$, we have
 $$\mathrm{ind} (T_{\gamma \overline{z}^m + \alpha z^m + \beta}) = m.$$

$\mathrm{(2)}$~
$$ \Omega_1= \big\{ ( \alpha,~\beta,~\gamma) \in {\C}^3: 2|\alpha|^2+2|\gamma|^2<|\beta|^2+|\beta^2-4\alpha \gamma| \big\},$$
and on $ \Omega_1$, we have
$$\mathrm{ind} (T_{\gamma \overline{z}^m + \alpha z^m + \beta}) = 0.$$
Moreover, $ T_{\gamma \overline{z}^m + \alpha z^m + \beta}$ is invertible on $ \Omega_1$.

$\mathrm{(3)}$~ 
$$\Omega_2= \big\{ ( \alpha,~\beta,~\gamma) \in {\C}^3: |\alpha|^2 - |\gamma|^2> |\alpha \overline{\beta} - \beta\overline{\gamma}|~\big\},$$
and on $ \Omega_2$, we have
 $$\mathrm{ind} (T_{\gamma \overline{z}^m + \alpha z^m + \beta}) = -m.$$
\end{theorem}
%------------------------------------------proof of (1)
\begin{proof}

$\mathrm{(1)}$~Since $( \alpha,~\beta,~\gamma) \in \Omega_0$, we have that the zeros $t_0$ and $t_1$ of $\alpha t^2+\beta t+\gamma$ are outside $\D$.
By Vieta's formulas, $\frac{1}{t_0}$ and $\frac{1}{t_1}$ are the roots of  $ \gamma t^2+\beta t + \alpha =0$.
The application of Theorem \ref{dis2}  gives $|\gamma|^2-|\alpha|^2>|\alpha \overline{\beta} -\beta\overline{\gamma}|.$

For the remaining part of statement $\mathrm{(1)}$, by Lemma \ref{p connect} and the continuity of index, it suffice to check the index for any fixed point in $\Omega_0$. When $\beta =0$ and $|\gamma| > |\alpha|$, using Theorem \ref{cor3}, $\mathrm{dim~ker} (T_\varphi)= m$ and $\mathrm{dim~ker}(T^*_\varphi)=0$.
It follows that $\mathrm{ind} A(\Lambda) = m$ on $\Omega_0$. We complete the proof.

%

%------------------proof of (2)

$\mathrm{(2)}$~~Let $( \alpha,~\beta,~\gamma) \in \Omega_1$. Then $\alpha t^2+\beta t+\gamma$ has exactly one zero inside $\D$ and has no zeros on $\T$.

When $\alpha =0$, we have the modulus of the zero $|t|=|\frac{-\gamma}{\beta}| < 1$, this is $|\gamma| < |\beta|$.

When $\alpha \neq 0$, then $\alpha t^2+\beta t+\gamma$ has one zero inside $\D$ and one zero outside $\D$. By Vieta's theorem, we can get two roots with  $t_0 =\frac{-\beta + \sqrt{\beta^2- 4\alpha \gamma}}{2 \alpha }$ and $t_1 =\frac{-\beta - \sqrt{\beta^2- 4\alpha \gamma}}{2 \alpha}$,  here $\sqrt{\beta^2- 4\alpha \gamma}$ means the usual principal branch. Hence either $|t_0|<1<|t_1|$ or $|t_1|<1<|t_0|$. It implies that 
$$|\beta|^2+|\beta^2-4\alpha \gamma|-2\mathrm{Re}\left(\overline{\beta}\sqrt{\beta^2- 4\alpha \gamma}\right)<4|\alpha|^2<|\beta|^2+|\beta^2-4\alpha \gamma|+2\mathrm{Re}\left(\overline{\beta}\sqrt{\beta^2- 4\alpha \gamma}\right)$$
or 
$$|\beta|^2+|\beta^2-4\alpha \gamma|+2\mathrm{Re}\left(\overline{\beta}\sqrt{\beta^2- 4\alpha \gamma}\right)<4|\alpha|^2<|\beta|^2+|\beta^2-4\alpha \gamma|-2\mathrm{Re}\left(\overline{\beta}\sqrt{\beta^2- 4\alpha \gamma}\right).$$
This is equivalent to 
$$ \bigg| 4|\alpha|^2-|\beta|^2-|\beta^2-4\alpha \gamma| \bigg|< \bigg| 2\mathrm{Re}\left(\overline{\beta}\sqrt{\beta^2- 4\alpha \gamma}\right) \bigg| .$$
Since for a complex number $z$, $|2\mathrm{Re} z|^2=(z+\bar{z})^2=2\mathrm{Re}z^2 +2|z|^2$, we can rewrite the above inequality as 
$$ \left( 4|\alpha|^2-|\beta|^2-|\beta^2-4\alpha \gamma| \right)^2 <2|\beta|^2 |\beta^2-4\alpha \gamma| +2\mathrm{Re} \left( |\beta|^4-4\overline{\beta}^2\alpha \gamma \right).$$
This is equivalent to
$$ 2|\alpha|^2+2|\gamma|^2<|\beta|^2+|\beta^2-4\alpha \gamma|. $$
Note that when $\alpha=0$, the above inequality is reduced to $|\gamma|<|\beta|$, hence in all, we conclude that 
$$ \Omega_1= \big\{ ( \alpha,~\beta,~\gamma) \in {\C}^3: 2|\alpha|^2+2|\gamma|^2<|\beta|^2+|\beta^2-4\alpha \gamma| \big\}.$$

For the remaining part of statement $\mathrm{(2)}$, again by Lemma \ref{p connect} and the continuity of index, it suffice to check the index for any fixed point in $\Omega_1$. If $\alpha =0$ and $|\gamma| < |\beta|$, then we have $\mathrm{ker}\left( T_\varphi\right)= \{0\}$ by Lemma \ref{poly ker}.
Since $T^*_\varphi$ is an analytic Toeplitz operator, we have $\ker \left(T^*_\varphi\right)= \{0\}.$
Hence  $\mathrm{ind} A(\Lambda) = 0$ on $\Omega_1$. Moreover, by the proof of Theorem \ref{main m}, we conclude that $A(\Lambda)$ is always injective, therefore $A(\Lambda)$ is invertible on $\Omega_1$.

%------------------------- proof of (3)
$\mathrm{(3)}$~If $( \alpha,~\beta,~\gamma) \in \Omega_2$, we have that the zeros  of $\alpha t^2+\beta t+\gamma$ are all inside $\D$.
By  Theorem \ref{dis2}, we have  $|\alpha|^2 - |\gamma|^2> |\alpha \overline{\beta} - \beta\overline{\gamma}|.$

For the remaining part of statement $\mathrm{(3)}$, again by Lemma \ref{p connect} and the continuity of index, it suffice to check the index for any fixed point in $\Omega_2$. When $\beta =0$ and $|\alpha| > |\gamma |$,  Theorem \ref{cor3} gives $ \ker \left(T_\varphi\right)= 0$ and $\dim \ker \left(T^*_\varphi\right)=m$.
It follows that $\mathrm{ind} A(\Lambda) = -m$ on $\Omega_2$. The proof is completed.

\end{proof}%------------------ finish the proof
%\begin{remark}
%Suppose $\gamma =1$,  $|\alpha|= 1$~and~$ 1-|\alpha|^2 =|\alpha \overline{\beta} - \beta| $.
%Set $\alpha = e^{\mathrm{i}2\theta}$. Then simple calculations imply $ \beta = |\beta|e^{\mathrm{i}\theta}$.
%At this moment, $\varphi (z)= \overline{z}^m +  e^{\mathrm{i}2\theta} z^m + |\beta|e^{\mathrm{i}\theta}= e^{\mathrm{i}\theta}~\big( e^{-\mathrm{i}\theta} \overline{z}^m +  e^{\mathrm{i}\theta} z^m + |\beta| \big):= e^{\mathrm{i}\theta}\varphi_1 (z) $~and~$T_{\varphi}=e^{\mathrm{i}\theta} T_{\varphi_1}$.
%$T_{\varphi_1}$ is an adjoint Toeplitz operator with  bounded real harmonic symbol and has  nonzero eigenvectors, see \cite{MC1979}.
%The essential spectrum of $ T_{\varphi_1} $ is $[ \inf \varphi_1, \sup \varphi_1 ]$, refer to \cite{MC1979}.
%\end{remark}
%-----------cor kernel

Based on the proof of the above theorem and Theorem \ref{injective}, for Toeplitz operator $T_{\varphi}$ with $\varphi (z)= \gamma \overline{z}^m + \alpha z^m + \beta$,
 it is only when the zeros $t_0$ and $t_1$ of $\alpha t^2+\beta t+\gamma$ both lie on $\T$~that~$T_{\varphi}$~may not be injective.
In this case, $|t_0t_1|= |\frac{\gamma}{\alpha}|=1$.
Set \[ \alpha = e^{\mathrm{i}2\theta}\gamma =  e^{\mathrm{i}(2\theta + \theta_1)}|\gamma|~ \text{and}~\beta = |\beta|e^{\mathrm{i}\theta_2}.\]
For the case $\beta =0$, see Theorem \ref{cor3}.
We consider the case $\beta \neq 0$.
Since $|t_0|^2 = |t_1|^2 =1$, we have  $ \mathrm{Re}~\big(\overline{\beta}\sqrt{\beta^2- 4\alpha \gamma} \big) =0$ by the proof of above theorem.
Set $\overline{\beta}\sqrt{\beta^2- 4\alpha \gamma} = iy$ with $y \in \R$.
It follows that $ |\beta|^2 - 4 {\overline{\beta}}^2\alpha \gamma  =-y^2$.
Therefore, $$ 4{\overline{\beta}}^2\alpha \gamma = |\beta|^2 + y^2 =4|\beta\gamma^2|e^{\mathrm{i}(2\theta +2 \theta_1-2 \theta_2)} \ge 0.$$
We conclude that $ \frac{\beta}{\gamma}e^{-\mathrm{i\theta}}= |\frac{\beta}{\gamma}|e^{\mathrm{-i}(\theta + \theta_1- \theta_2)} \in \R.$
In this case
$$ \varphi (z) = \gamma \overline{z}^m + \alpha z^m + \beta =\gamma e^{\mathrm{i}\theta} \big( e^{-\mathrm{i}\theta} \overline{z}^m + e^{\mathrm{i}\theta}  z^m +  \frac{\beta}{\gamma}e^{-\mathrm{i}\theta} \big):=\gamma e^{\mathrm{i}\theta} \varphi_1 (z). $$
Then
$ T_{\varphi} =  \gamma e^{\mathrm{i}\theta}T_{ \varphi_1 }$,  where $T_{\varphi_1}$ is a Toeplitz operator with  bounded real harmonic symbol.
Since $\gamma \neq 0$, we have $\mathrm{ker}\left( T_\varphi\right)=\mathrm{ker} \left(T_{\varphi_1}\right)$.
If there exist $g(z) \in  L^2_a({\mathbb D})$ such that $T_{\varphi_1}g =0$, then $$ T_{e^{-\mathrm{i}\theta} \overline{z}^m + e^{\mathrm{i}\theta}  z^m}g = -\frac{\beta}{\gamma}e^{-\mathrm{i}\theta} g~(\beta \neq 0).$$
Since  a Toeplitz operator with the bounded real harmonic symbol  has  nonzero eigenvectors \cite[Proposition 13]{MC1979},
we have $\mathrm{ker}\left( T_\varphi\right)=\{0\}$.
Thus, we arrive at the following  corollary.
\begin{corollary}
Suppose that $\varphi (z)= \gamma \overline{z}^m + \alpha z^m + \beta$.
 Then $T_{\varphi}$ is always of Coburn type.
\end{corollary}

%--------------references--------------------------------------------

%\bibliographystyle{amsplain}

\end{document}